\DeclareFontFamily{U}{euf}{}
\DeclareFontShape{U}{euf}{m}{n}{%
  <5><6><7><8><9>gen*eufm%
  <10><10.95><12><14.4><17.28><20.74><24.88>eufm10%
  }{}
\DeclareFontShape{U}{euf}{b}{n}{%
  <5><6><7><8><9>gen*eufb%
  <10><10.95><12><14.4><17.28><20.74><24.88>eufb10%
  }{}
\DeclareFontFamily{U}{msb}{}
\DeclareFontShape{U}{msb}{m}{n}{%
  <5><6><7><8><9>gen*msbm%
  <10><10.95><12><14.4><17.28><20.74><24.88>msbm10%
  }{}
\DeclareFontFamily{U}{msa}{}
\DeclareFontShape{U}{msa}{m}{n}{%
  <5><6><7><8><9>gen*msam%
  <10><10.95><12><14.4><17.28><20.74><24.88>msam10%
  }{}
\newtheorem{theorem}{Theorem}[section]
\newtheorem{lemma}[theorem]{Lemma}
\newtheorem{proposition}[theorem]{Proposition}
\newtheorem{corollary}[theorem]{Corollary}
\theoremstyle{definition}
\newtheorem{remark}[theorem]{Remark}
\newtheorem{example}[theorem]{Example}
\numberwithin{equation}{section} \frenchspacing
\begin{document}

\def\Re{{\rm Re}}
\def\Im{{\rm Im}}
\def\Li{{\rm Li}}

\title[Sums of infinite series]
{Sums of infinite series involving the Dirichlet lambda function}

\begin{abstract}
The Dirichlet lambda function $\lambda(s)$ is defined for $\mathrm{Re}(s) > 1$ by
\[
\lambda(s) = \sum_{n=0}^{\infty} \frac{1}{(2n+1)^s}.
\]
This function was initially studied by Euler on the real line, where he denoted it by $N(s)$.

In this paper, by applying the partial fraction decomposition of $\pi \tan(\pi x)$ and explicit evaluations of the integrals
\[
\int_0^{\frac{1}{2}} x^{2m-1} \cos(2l\pi x)  dx \quad \text{and} \quad \int_0^{\frac{1}{2}} x^{m-1} \log \cos(\pi x)  dx,
\]
for positive integers $l$ and $m$, we derive closed-form expressions for several classes of infinite series involving $\lambda(s)$. 
We also demonstrate that the values $\lambda(k)$ for even integers $k \geq 2$ arise as constant terms in the Fourier expansions of Eisenstein series associated with the congruence subgroup
\[
\Gamma_0(2) := \left\{ 
\begin{pmatrix} 
    a & b \\ 
    c & d 
\end{pmatrix} \in \operatorname{SL}_2(\mathbb{Z}) : c \equiv 0 \pmod{2}
\right\}.
\]
\end{abstract}

\author{Su Hu}
\address{Department of Mathematics, South China University of Technology, Guangzhou, Guangdong 510640, China}
\email{mahusu@scut.edu.cn}

\author{Min-Soo Kim}
\address{Department of Mathematics Education, Kyungnam University, Changwon, Gyeongnam 51767, Republic of Korea}
\email{mskim@kyungnam.ac.kr}

\subjclass[2000]{11M06}
\keywords{Dirichlet lambda function, Zeta function, Infinite series}


\maketitle

\vspace*{1ex}

\section{Introduction}\label{intro}

Let
\begin{equation}\label{zeta-def}
\zeta(s)=\sum_{n=1}^\infty\frac1{n^s},\quad\textrm{Re}(s)>1
\end{equation} 
be Riemann's zeta functions.
The following sums of infinite series are well-known and may date back to Euler
(see \cite[pp. 479--480, (6) and (8)]{Jo}):
\begin{equation}\label{10}
\sum_{n=1}^\infty(\zeta(2n)-1)=\frac34,
\end{equation}
\begin{equation}\label{11}
\sum_{n=1}^\infty\frac{\zeta(2n)-1}{n}=\log2
\end{equation}
and
\begin{equation}\label{17}
\sum_{n=1}^\infty\frac{\zeta(2n)-1}{n+1}=\frac{3}{2}-\log\pi.
\end{equation}
By solving a problem proposed in  Mathematics Magazine \cite{Omarjee},
Mortini and Rupp \cite{MR} obtained the following formula
\begin{equation}\label{13}
\sum_{n=1}^\infty\frac{\zeta(2n)-1}{n+2}=\frac{7}{4}.
\end{equation}
And they further got  the following more general results.

\begin{theorem}[{\cite[Theorem 1.2]{MR}}]\label{Theorem 1.1}
 Let \( m \in \mathbb{N} \). Then
\begin{equation}\label{3}
\begin{aligned}
&\quad\sum_{n=1}^\infty\frac{\zeta(2n)-1}{n+m}\\
&=\begin{cases}\displaystyle
\frac{1}{2m}+H_m-\log\pi-2m\sum_{j=1}^{m-1}(-1)^{j+1}\binom{2m-1}{2j-1}\frac{(2j-1)!}{(2\pi)^{2j}}\zeta(2j+1)~~&\textrm{if}~~m\geq 2,\\
\displaystyle
\frac{3}{2}-\log\pi~~&\textrm{if}~~m=1,
\end{cases}
\end{aligned}
\end{equation}
where $H_m$ denotes the harmonic numbers:
\begin{equation}\label{Ha}
	H_m=1+\frac12+\cdots+\frac1m=\int_0^1\frac{1-t^m}{1-t}dt \quad\text{and}\quad H_0:=0.
\end{equation}
\end{theorem}

In the following, we denote by $\lceil x \rceil$ the smallest integer greater than or equal to $x$; that is,
\[
\lceil x \rceil = \min\{n \in \mathbb{Z} \mid n \geq x\}.
\]
As is customary, an empty sum is defined to be zero.

\begin{theorem}[{\cite[Theorem 3.2]{MR}}]\label{Theorem 1.2}
 Let \( m \in \mathbb{N} \). Then
\begin{equation}\label{3-1}
\begin{aligned}
&\quad\sum_{n=1}^{\infty} \frac{\zeta(2n)-1}{2n+m} \\
&= \frac{1}{2} \left( \frac{1}{m} - R(m) - \log \pi - m \sum_{j=1}^{\lceil \frac{m}{2} \rceil - 1} (-1)^{j+1} \binom{m-1}{2j-1} \frac{(2j-1)!}{(2\pi)^{2j}} \zeta(2j+1) \right),
\end{aligned}
\end{equation}
where 
\[
R(m) := 2 \int_{0}^{1} \frac{x^{m+1}-x}{1-x^2} \, dx = 
\begin{cases}
-H_{\frac{m}{2}} & \text{if } m \text{ is even}, \\
-H_{\frac{m-1}{2}} - 2\bar{H}_m + \log 4 & \text{if } m \text{ is odd},
\end{cases}
\]  
and $\bar H_m$ denotes the alternating harmonic numbers:
\begin{equation}\label{al-Ha}
\bar H_m=1-\frac12+\cdots+\frac{(-1)^{m-1}}{m}=\int_0^1\frac{1-(-t)^m}{1+t}dt\quad \text{and}
\quad\bar H_0:=0.
\end{equation}
\end{theorem}

Their approaches are mainly based on the following partial fractional decomposition 
of the function $\pi z \cot(\pi z)$ (see, for example, \cite[Lemma 2.1]{MR})
\begin{equation}\label{1}
\pi z\cot(\pi z)=1+\sum_{n=1}^{\infty}\frac{2z^{2}}{z^{2}-n^{2}},\quad z\in\mathbb{C}\setminus\mathbb{Z}
\end{equation}
and the following trigonometric integrals from 0 to 1 (see \cite[Lemma 2.2]{MR}):
\begin{equation}\label{2}
\begin{aligned}
\int_0^1 x^{2m-1} & \cos(2l\pi x)\,dx \\
&= \begin{cases} 
\displaystyle\sum_{j=1}^{m-1} (-1)^{j+1} \binom{2m-1}{2j-1} (2j-1)! \frac{1}{(2\pi l)^{2j}} & \text{if}~ m \geq 2, \\
0 & \text{if}~ m = 1,
\end{cases}
\end{aligned}
\end{equation}
and
\begin{equation}\label{2+}
\int_0^1 x^{2m} \cos(2l\pi x)\,dx = \sum_{j=1}^{m} (-1)^{j+1} \binom{2m}{2j-1} (2j-1)! \frac{1}{(2\pi l)^{2j}}
\end{equation}
for $l,m\in\mathbb{N}$.

In this paper, we consider a dual perspective to the above result. For $\operatorname{Re}(s) > 1$, define Dirichlet's lambda function as
\begin{equation}\label{lam-def}
\lambda(s) = \sum_{n=0}^{\infty} \frac{1}{(2n+1)^{s}}=\frac{1}{2^{s}}\sum_{n=1}^{\infty}\frac{1}{\left(n+\frac{1}{2}\right)^{s}},
\end{equation}
which can be interpreted as summing over intermediate points of the intervals $[n, n+1)$ ($n \in \mathbb{N}$) on the real line, contrasting with the Riemann zeta function $\zeta(s)$ that sums over endpoints.

Historically, as remarked by Varadarajan \cite[p.~59]{Var}, Euler first studied $\lambda(s)$ on the real axis and denoted it by $N(s)$. He also examined its alternating counterpart (see \cite[p.~954, (1.13)]{SK2019} and \cite[p.~70]{Var}),
\begin{equation}\label{beta-def}
\beta(s) = \sum_{n=0}^{\infty} \frac{(-1)^{n}}{(2n+1)^{s}},
\end{equation}
defined for $\operatorname{Re}(s) > 0$, which he denoted by $L(s)$ (now known as the Dirichlet beta function). Notable its special values include
\begin{align*}
\beta(1) &= \frac{\pi}{4} \quad (\text{corresponding to } \tan^{-1}(1)), \\
\beta(2) &= G \quad (\text{Catalan's constant}), \\
\beta(3) &= \frac{\pi^3}{32}.
\end{align*}
In \cite{SK2019}, we have established several infinite families of linear recurrence relations for $\lambda(2n)$ ($n \in \mathbb{N}$) by using generating functions, along with convolution identities connecting values of $\lambda(2n)$ and $\beta(2n+1)$.

In addition, for $\operatorname{Re}(s) > 0$, define the Dirichlet eta function by
\begin{equation}\label{eta}
\eta(s) = \sum_{n=1}^\infty \frac{(-1)^{n-1}}{n^s}.
\end{equation}
Comparing definitions \eqref{lam-def}, \eqref{beta-def} and \eqref{eta}, we have the following relations
\begin{equation}\label{zeta-lam1}
\frac{\lambda(s)}{2^s - 1} = \frac{\eta(s)}{2^s - 2} = \frac{\zeta(s)}{2^s}
\end{equation}
and
\begin{equation}\label{zeta-lam2}
\zeta(s) + \eta(s) = 2\lambda(s)
\end{equation}
(see, for example, \cite{Wiki}).

The remaining parts of this paper are organized as follows.

In Section~\ref{Pre}, Theorem~\ref{thm-FS2} demonstrates that the lambda-values $\lambda(k)$ arise as the constant terms of the Eisenstein series for the congruence subgroup
$$
\Gamma_{0}(2) := \left\{
\begin{pmatrix}
    a & b \\
    c & d
\end{pmatrix} \in \operatorname{SL}_{2}(\mathbb{Z}) \ : \
c \equiv 0 \pmod{2}
\right\}.
$$
This result stems from the partial fraction decomposition of $\pi \tan(\pi x)$.

Similarly, it is well known that the zeta-values $\zeta(k)$ appear as constant terms of the Eisenstein series for the full modular group $\operatorname{SL}_{2}(\mathbb{Z})$, which is derived from the partial fraction decomposition of $\pi x \cot(\pi x)$ (see (\ref{modular})).

In Section~\ref{main results}, we state our main results. These include evaluations of infinite series involving the Dirichlet lambda function $\lambda(s)$: for $p \in \mathbb{N}$,
$$
\sum_{n=1}^\infty \binom{2n}{p} (\lambda(2n) - 1) \quad \text{and} \quad \sum_{n=1}^\infty n^p (\lambda(2n) - 1)
$$
(see Theorems \ref{thm1}, \ref{thm2}, \ref{thm3}, \ref{thm4} and \ref{thm5}).

Section~\ref{Lammas} contains proofs of necessary lemmas for our approach. These include the partial fraction decomposition of $\pi \tan(\pi x)$ and the explicit calculation of the integral
$\int_0^{\frac{1}{2}} x^{2m-1} \cos(2l\pi x)  dx$ for $l, m \in \mathbb{N}$.

Finally, Section~\ref{proofs} provides proofs for the theorems, propositions, and corollaries stated in the preceding sections.

\section{Eisenstein series for $\Gamma_{0}(2)$}\label{Pre}
In this section, we prove that the lambda-values $\lambda(k)$ arise as constant terms of Eisenstein series for the congruence subgroup
\[
\Gamma_{0}(2) := \left\{
\begin{pmatrix}
    a & b \\
    c & d
\end{pmatrix} \in \operatorname{SL}_{2}(\mathbb{Z}) : c \equiv 0 \pmod{2}
\right\}
\]
(see Theorem~\ref{thm-FS2}). 

First, we recall the Fourier expansion of Eisenstein series for the full modular group $\operatorname{SL}_2(\mathbb{Z})$. The modular group consists of integer matrices with determinant 1:
\[
\operatorname{SL}_2(\mathbb{Z}) = \left\{ \begin{pmatrix} a & b \\ c & d \end{pmatrix} \in \operatorname{Mat}_{2\times2}(\mathbb{Z}) : ad - bc = 1 \right\}.
\]
For an even integer $k \geq 4$ and $z \in \mathbb{C}$ with $\operatorname{Im}(z) > 0$, the weight $k$ Eisenstein series is defined by
\begin{equation}
	G_{k}(z) := \sum_{\substack{(m,n) \in \mathbb{Z}^2 \\ (m,n) \neq (0, 0)}} \frac{1}{(m z + n)^{k}}.
\end{equation}
These series admit the Fourier expansion at the cusp $i\infty$:
\begin{equation}\label{modular}
	G_{k}(z) = 2\zeta(k) + \frac{2(-2\pi i)^{k}}{(k-1)!} \sum_{n=1}^\infty \sigma_{k-1}(n)q^n,
\end{equation}
where $q = e^{2\pi iz}$ and $\sigma_{k-1}(n) = \sum_{d\mid n} d^{k-1}$. For even integers $k \geq 4$, $G_{k}(z)$ is a nonzero modular form of weight $k$ for $\operatorname{SL}_2(\mathbb{Z})$ (see \cite[Propositions 2.6 and 2.8]{Kilford}).

Now for a positive integer $N$, we define the following three fundamental congruence subgroups:
\begin{equation*}
	\begin{aligned}
		\Gamma_0(N) &:= \left\{ \begin{pmatrix} a & b \\ c & d \end{pmatrix} \in \operatorname{SL}_2(\mathbb{Z}) : c \equiv 0 \pmod{N} \right\}, \\
		\Gamma_1(N) &:= \left\{ \begin{pmatrix} a & b \\ c & d \end{pmatrix} \in \Gamma_0(N) : a \equiv d \equiv 1 \pmod{N} \right\}, \\
		\Gamma(N) &:= \left\{ \begin{pmatrix} a & b \\ c & d \end{pmatrix} \in \Gamma_1(N) : b \equiv 0 \pmod{N} \right\}.
	\end{aligned}
\end{equation*}
Notably, $\Gamma_0(2)$ and $\Gamma_1(2)$ coincide when $N=2$.
Let $k$ be an integer which is at least 3 (note that $k$ be odd or even).
Then for a point $\mathbf{a} = (a_1, a_2) \in (\mathbb{Z}/N\mathbb{Z})^2$ and $\mathbf{a}\neq (0,0)$, let
\begin{equation}
	G_{k}^{(a_1,a_2)}(z) :=\sum_{\substack{m_1,m_2 \in \mathbb{Z}\\(m_1,m_2) \equiv (a_1,a_2)\!\!\! \pmod{N}}} \frac{1}{(m_1 z + m_2)^{k}}
\end{equation}
be  the level $N$ Eisenstein series of weight $k$ due to Koblitz \cite[Section III.3]{Ko}. These series satisfy (see \cite[Theorem 2.21]{Kilford}):
\begin{itemize}
	\item $G_{k}^{(a_1,a_2) \bmod N} \in \mathcal{M}_{k}(\Gamma(N))$.
	\item $G_{k}^{(0,a_2) \bmod N} \in \mathcal{M}_{k}(\Gamma_1(N))$.
\end{itemize}
Specifying $N=2$ and $(a_1, a_2)=(0,1)$ and denote
$$G^*_{k}(z):=G_{k}^{(0,1)}(z)=\sum_{\substack{m,n\in\mathbb Z \\ n\,{\rm odd}}}\frac{1}{(2mz+n)^{k}}$$
for $2<k\in\mathbb N,$
we have $G^*_{k}(z) \in\mathcal{M}_{k}(\Gamma_0(2))=\mathcal{M}_{k}(\Gamma_1(2))$.
The following proposition shows its Fourier expansion at the cusp $i\infty$.

(Note. It is easy to check that $G^*_{k}(z)$ approaches a finite limit as $z\to i\infty$:
$$\lim_{z\to i\infty}\sum_{\substack{m,n\in\mathbb Z \\ n\,{\rm odd}}}\frac{1}{(2mz+n)^{k}}
=\sum_{n\in\mathbb Z,n\,{\rm odd}}\frac{1}{n^{k}}=2\lambda(k)$$
for a positive integer $k>2.$)

\begin{theorem}\label{thm-FS2}
	For an even integer $k>2$ and $z \in \mathbb{C}$ with $\operatorname{Im}(z) > 0$, we have
	$$G^*_{k}(z)=2\lambda(k)+\frac{2(\pi i)^{k}}{(k-1)!}\sum_{n=1}^\infty\sigma^*_{k-1}(n)e^{2\pi inz},$$
	where  $\sigma^*_{k-1}(n)$ denotes the alternating divisor functions:
	$$\sigma^*_{k-1}(n)=\sum_{d\mid n}(-1)^dd^{k-1}.$$
\end{theorem}
To establish this result, we require several propositions. We begin by stating the partial fraction decompositions of $\pi \tan(\pi z)$ and $\pi z \cot(\pi z)$, along with the infinite product representation of $\cos(\pi z)$.
\begin{proposition}\label{pro-FS}
	\begin{enumerate}
		\item[\rm(1)]  For all $2z\in\mathbb C\setminus 2\mathbb Z+1=\left\{2m+1\mid m\in\mathbb Z\right\},$ we have
		$$-\pi\tan(\pi z)=\sum_{n=1, n {\rm\,odd}}^\infty\frac{8z}{(2z)^2-n^2}=2\sum_{n=1, n {\rm\,odd}}^\infty\left(\frac1{2z+n}+\frac1{2z-n}\right).$$
		\item[\rm(2)] For all $z\in\mathbb C,$ we have
		$$\cos(\pi z)=\prod_{n=1, n {\rm\,odd}}\left(1-\frac{z^2}{\left(\frac n2\right)^2}\right).$$
	\end{enumerate}
\end{proposition}
\begin{proof}
	They are follows from \cite[p. 43, 1.421(1)]{GR} and \cite[p. 291, (1.15)]{SK2025}.
\end{proof}

By Proposition \ref{pro-FS}(1), we have the following Fourier expansion.

\begin{proposition}\label{cor-FS}
	Let $z \in \mathbb{C}$ with $\operatorname{Im}(z) > 0.$
	Then for any positive integer $k\geq2,$ we have
	$$\sum_{n\in\mathbb Z, n {\rm\,odd}}\frac{1}{\left(z+\frac n2\right)^{k}} 
	=\frac{(-2\pi i)^{k}}{(k-1)!}\sum_{m=1}^\infty(-1)^mm^{k-1}e^{2\pi imz}.$$
\end{proposition}
\begin{proof}
As observed by Vignat (personal communication), this result also follows from the Lipschitz summation formula (see Theorem~1 in Knop and Robins~\cite{KR}). For completeness and accessibility, we provide here a self-contained proof based on the Fourier expansion of $\tan(\pi z)$.

	Let $q = e^{2\pi i z}$ with $|q| = e^{-2\pi \operatorname{Im}(z)} < 1$. Recall the following identity for the tangent function expressed in terms of $q$:
	\begin{equation*}
		\tan(\pi z) 
		= \frac{\sin(\pi z)}{\cos(\pi z)} 
		= i \frac{1 - q}{1 + q} 
		= i \left( 1 + 2 \sum_{m=1}^\infty (-1)^m q^m \right).
	\end{equation*}
	Applying Proposition \ref{pro-FS} to this expansion, we have
	\begin{equation*}
		-\pi i \left( 1 + 2 \sum_{m=1}^\infty (-1)^m q^m \right)
		= 2 \sum_{\substack{n=1 \\ n \text{ odd}}}^\infty \left( \frac{1}{2z + n} + \frac{1}{2z - n} \right).
	\end{equation*}
	To generalize this result, consider $(k-1)$-fold differentiation (justified by uniform convergence on compact subsets of the upper half-plane). For $k \geq 2$, this yields
	\begin{equation*}
		-(2\pi i)^k \sum_{m=1}^\infty (-1)^m m^{k-1} q^m
		= (-1)^{k-1} (k-1)! \sum_{\substack{n \in \mathbb{Z} \\ n \text{ odd}}}^\infty \frac{1}{\left(z + \frac{n}{2} \right)^k},
	\end{equation*}
	where the differentiation operator acts on both sides of the original equality. This completes the proof of the proposition.
\end{proof}

Let $E_{n}(x)$ be  the $n$-th Euler polynomial generated by the Taylor series expansion
	$$
	\sum_{n=0}^\infty E_n(x)\frac{t^n}{n!}=\frac{2e^{xt}}{e^t+1}, 
	$$
	where $|t|<\pi,$ see for example, \cite[p. xxxi]{GR}, \cite[Chapter 20]{OMS} and \cite[p. 86, (40)]{SC}. We have the following result on the special values of Dirichlet's lambda functions.
\begin{proposition}\label{proFS-co2}
	For $k\in\mathbb N,$ we have
	$$\sum_{n\in\mathbb Z,n\,{\rm odd}}\frac{1}{n^{k+1}}
	=\begin{cases}
		\frac{(-1)^{\frac{k+1}{2}}\pi^{k+1}}{2(k!)}E_{k}(0) &\text{if  $k$ is odd}, \\
		0 &\text{if $k$ is even}.
	\end{cases}$$
	In particular, if $k$ is odd, then $$\lambda(k+1)=\frac{(-1)^{\frac{k+1}{2}}\pi^{k+1}}{4(k!)}E_{k}(0).$$
\end{proposition}
\begin{proof}As noted by Vignat (personal communication), this result can also be derived from identity 24.8.5 in the \textit{NIST Handbook of Mathematical Functions} \cite{NIST}, which gives the Fourier expansion of the Riemann zeta function. For completeness, we provide here an alternative proof based on the partial fraction decomposition of $\pi \tan(\pi z)$.

For $2z \in \mathbb{C} \setminus (2\mathbb{Z} + 1)$, the partial fraction decomposition (Proposition \ref{pro-FS}(1) and \cite[p.~43, 1.421(1)]{GR}) shows that 
\[
-\pi \tan(\pi z) = 2 \sum_{\substack{n=1 \\ n \text{ odd}}}^\infty \left( \frac{1}{2z + n} + \frac{1}{2z - n} \right).
\]
Differentiating $k$ times with respect to $z$ (justified by uniform convergence) yields 
\begin{equation}
\left( \frac{d}{dz} \right)^k \left( \pi \tan(\pi z) \right) 
= (-1)^{k+1} k! \sum_{\substack{n \in \mathbb{Z} \\ n \text{ odd}}} \frac{1}{\left( z + \frac{n}{2} \right)^{k+1}}
\end{equation}
for $k \geq 1$.

Conversely, the Taylor series expansion at $z=0$ \cite[p.~371, (18.7)]{Ri} is
\[
\tan(\pi z) = \sum_{n=1}^\infty \frac{(-1)^{\frac{n+1}{2}} 2^n E_n(0)}{n!} (\pi z)^n.
\]
Then define  $f(z) = \pi \tan(\pi z)$. Evaluating the $k$-th derivative at $z=0$ gives
\[
f^{(k)}(0) = 
\begin{cases} 
\pi^{k+1} (-1)^{\frac{k+1}{2}} 2^k E_k(0) & \text{if $k$ is odd}, \\ 
0 & \text{if $k$ is even},
\end{cases}
\]
where $f^{(k)}$ denotes the $k$th derivative. Combining these results completes the proof.
\end{proof}

\subsection*{Proof of Theorem \ref{thm-FS2}}
	By multiplying both sides of $G^*_{k}(z)$ by $2^{k}$, we have the following reformulation:
	\begin{equation*}
		\begin{aligned}
			2^{k}G^*_{k}(z)
			&= \sum_{m\in\mathbb{Z}} \sum_{\substack{n\in\mathbb{Z} \\ n \text{ odd}}} \frac{1}{\left(mz + \frac{n}{2}\right)^{k}} \\
			&= \underbrace{\sum_{\substack{n\in\mathbb{Z} \\ n \text{ odd}}} \frac{1}{\left(\frac{n}{2}\right)^{k}}}_{\text{constant term}} 
			+ \sum_{m\in\mathbb{Z}\setminus\{0\}} \sum_{\substack{n\in\mathbb{Z} \\ n \text{ odd}}} \frac{1}{\left(mz + \frac{n}{2}\right)^{k}} \\
			&= 2\sum_{n=1}^\infty \frac{1}{\left(\frac{2n-1}{2}\right)^{k}} 
			+ \sum_{m=1}^\infty \sum_{\substack{n\in\mathbb{Z} \\ n \text{ odd}}} \left[ \frac{1}{\left(\frac{n}{2} + mz\right)^{k}} + \frac{1}{\left(\frac{n}{2} - mz\right)^{k}} \right] \\
			&= 2^{k+1}\sum_{n=1}^\infty \frac{1}{(2n-1)^{k}} 
			+ 2\sum_{m=1}^\infty \sum_{\substack{n\in\mathbb{Z} \\ n \text{ odd}}} \frac{1}{\left(\frac{n}{2} + mz\right)^{k}}.
		\end{aligned}
	\end{equation*}
	Then applying Proposition \ref{cor-FS} with substitutions $\tau \mapsto m\tau$
	for an even integer $k>2,$ we get
	\begin{equation*}
		\begin{aligned}
			2^{k}G^*_{k}(z)
			&= 2^{k+1}\lambda(k) + \frac{2(2\pi i)^{k}}{(2k-1)!} \sum_{m=1}^\infty \sum_{n=1}^\infty (-1)^n n^{k-1} e^{2\pi i mnz} \\
			&= 2^{k+1}\lambda(k) + \frac{2(2\pi i)^{k}}{(2k-1)!} \sum_{n=1}^\infty \left( \sum_{d \mid n} (-1)^d d^{k-1} \right) e^{2\pi i nz},
		\end{aligned}
	\end{equation*}
	which yields the desired conclusion.

\section{Statement of main results}\label{main results}
In this section, we present our main results on closed-form expressions for infinite series involving the Dirichlet lambda function $\lambda(s)$ (see Theorems \ref{thm1}, \ref{thm2}, \ref{thm3}, \ref{thm4} and \ref{thm5}). Their proofs will be given in Section~\ref{proofs}.

First, we state analogues for Dirichlet's lambda function $\lambda(s)$ of Euler's classical results (\ref{10}), (\ref{11}), and (\ref{17}). (See Section~\ref{proofs} for the proof).
\begin{proposition}\label{pro1}
	\begin{enumerate}
		\item[\rm(1)]  $\displaystyle\sum_{n=1}^\infty(\lambda(2n)-1)=\frac14.$
		\item[\rm(2)] $\displaystyle\sum_{n=1}^\infty\frac{\lambda(2n)-1}n=\log4-\log\pi.$
	\end{enumerate}
\end{proposition}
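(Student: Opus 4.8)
\emph{Proof strategy.} The plan is to obtain both sums from a single generating function for the values $\lambda(2n)$, extracted from the partial fraction expansion (\ref{8}) of $\pi\tan(\pi x)$; this mirrors the classical passage from (\ref{1}) to Euler's identities (\ref{10}) and (\ref{11}).

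First I would substitute $x\mapsto x/2$ in (\ref{8}) and rearrange to $\pi\tan(\pi x/2)=4x\sum_{k\ge0}\bigl((2k+1)^{2}-x^{2}\bigr)^{-1}$ for $x\notin2\mathbb Z+1$. Multiplying by $x/4$, expanding each summand as a geometric series in $x^{2}/(2k+1)^{2}$, and interchanging the resulting double series (absolutely convergent for $|x|<1$) gives
$$\frac{\pi x}{4}\tan\Bigl(\frac{\pi x}{2}\Bigr)=\sum_{j=1}^{\infty}\lambda(2j)\,x^{2j},\qquad|x|<1.$$
Subtracting $\sum_{n\ge1}x^{2n}=x^{2}/(1-x^{2})$ then yields $g(x):=\sum_{n\ge1}(\lambda(2n)-1)x^{2n}=\frac{\pi x}{4}\tan(\pi x/2)-\frac{x^{2}}{1-x^{2}}$ on $|x|<1$.

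For part (1): since $0<\lambda(2n)-1<\zeta(2n)-1$ and $\sum_{n}(\zeta(2n)-1)$ converges by (\ref{10}), the series $\sum_{n}(\lambda(2n)-1)$ converges, so by Abel's theorem its value equals $\lim_{x\to1^{-}}g(x)$. Writing $x=1-u$ and using $\tan(\pi x/2)=\cot(\pi u/2)=\frac{2}{\pi u}-\frac{\pi u}{6}+O(u^{3})$ gives $\frac{\pi x}{4}\tan(\pi x/2)=\frac{1}{2u}-\frac12+O(u)$, while $\frac{x^{2}}{1-x^{2}}=\frac{1}{2u}-\frac34+O(u)$; the simple poles cancel and $g(x)\to\frac14$. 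For part (2): term-by-term integration over $[0,1]$ — legitimate by Tonelli, since the summands of $g(x)/x=\sum_{n\ge1}(\lambda(2n)-1)x^{2n-1}$ are nonnegative and $g(x)/x$ extends continuously to $[0,1]$ by part (1) — gives $\sum_{n}\frac{\lambda(2n)-1}{n}=2\int_{0}^{1}g(x)/x\,dx$. A primitive of $\frac{\pi}{4}\tan(\pi x/2)-\frac{x}{1-x^{2}}$ is $\frac12\log\frac{1-x^{2}}{\cos(\pi x/2)}$, which vanishes at $x=0$ and tends to $\frac12\log\frac{4}{\pi}$ as $x\to1^{-}$ (with $x=1-u$: $\frac{1-x^{2}}{\cos(\pi x/2)}=\frac{u(2-u)}{\sin(\pi u/2)}\to\frac{4}{\pi}$). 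Hence $\int_{0}^{1}g(x)/x\,dx=\frac12\log(4/\pi)$ and the sum is $\log(4/\pi)=\log4-\log\pi$.

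The only delicate points are the local analysis at $x=1$ — cancelling the $1/\bigl(2(1-x)\bigr)$ singularities of the two pieces of $g$ and reading off the finite constants — and the routine justification of interchanging summation with a limit and with an integral; there is no real obstacle. If one prefers to bypass generating functions, the same values follow by interchanging the order of summation directly: for (1), $\sum_{k\ge1}\bigl((2k+1)^{2}-1\bigr)^{-1}=\frac14\sum_{k\ge1}\bigl(\frac1k-\frac1{k+1}\bigr)=\frac14$; for (2), $\sum_{n}\frac{\lambda(2n)-1}{n}=-\sum_{k\ge1}\log\bigl(1-(2k+1)^{-2}\bigr)=\log\prod_{k\ge1}\frac{(2k+1)^{2}}{2k(2k+2)}$, and the partial products collapse, via the Wallis product, to $4/\pi$.
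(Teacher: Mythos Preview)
Your proof is correct and follows essentially the same route as the paper: the generating function you call $g(x)$ is exactly the paper's $h(x)$ after the substitution $x\mapsto 2x$, and your evaluation of part~(2) by integrating $g(x)/x$ over $[0,1]$ and computing $\lim_{x\to1^-}\tfrac12\log\frac{1-x^2}{\cos(\pi x/2)}=\tfrac12\log(4/\pi)$ matches the paper's computation on $[0,\tfrac12]$ verbatim. The only cosmetic difference is in part~(1): the paper takes your ``alternative'' telescoping argument $\sum_{k\ge2}\frac{1}{(2k-1)^2-1}=\tfrac14\sum_{k\ge2}(\tfrac1{k-1}-\tfrac1k)=\tfrac14$ as its primary proof, whereas you lead with the Abel-limit/Laurent-expansion calculation of $\lim_{x\to1^-}g(x)$---both are fine and you include the telescoping anyway.
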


Furthermore, a ‘dual’ of (\ref{1}) might be the following partial fractional decomposition of $\pi\tan(\pi z)$
(see Proposition~\ref{pro-FS} (1)):
\begin{equation}\label{8}
-\pi\tan(\pi z)=\sum_{n=1, n {\rm\,odd}}^\infty\frac{8z}{(2z)^2-n^2}=2\sum_{n=1, n {\rm\,odd}}^\infty\left(\frac1{2z+n}+\frac1{2z-n}\right),
\end{equation}
for all $2z\in\mathbb C\setminus 2\mathbb Z+1=\left\{2m+1\mid m\in\mathbb Z\right\}$,  
while a ‘dual’ of (\ref{2}) may be considered as the following trigonometric integrals from $0$ to $\frac12$ (see Lemma  \ref{lem1-thm1} and \cite[Theorem 2.3]{SK2025}, respectively):
\begin{equation}\label{9}
\begin{aligned}
\int_0^{\frac12}x^{2m-1}\cos(2l\pi x) dx&=\frac{(-1)^l}{2^{2m}}\sum_{j=1}^{m-1}(-1)^{j-1}\binom{2m-1}{2j-1}(2j-1)!\frac{1}{(l\pi)^{2j}} \\
&\quad+(1-(-1)^l)\frac{(-1)^m(2m-1)!}{(2l\pi)^{2m}}
\end{aligned}
\end{equation}
and
\begin{equation}\label{9+}
\begin{aligned}
m\int_0^{\frac12}x^{m-1}\log\cos(\pi x)dx
&=-\frac{\log2}{2^m}-\frac{m}{2^m}\sum_{j=0}^{m-1}j!\binom{m-1}{j}\frac{\sin\left(\frac{j\pi}{2}\right)}{\pi^{j+1}}\zeta(j+2) \\
&\quad-\frac{m!}{(2\pi)^m}\zeta_E(m+1)\cdot
\begin{cases}
\displaystyle
0 &\text{if $m$ is odd}, \\
\displaystyle
(-1)^{\frac m2+1} &\text{if $m$ is even}.
\end{cases} 
\end{aligned}
\end{equation}

In this paper, by applying (\ref{8}), (\ref{9}) and (\ref{9+}), we shall determine  the following sums of infinite series involving the Dirichlet lambda function.
They are the analogues of Theorems \ref{Theorem 1.1} and \ref{Theorem 1.2} above.

\begin{theorem}\label{thm1}
For $m\in\mathbb N,$ we have
$$\begin{aligned}
\sum_{n=1}^\infty\frac{\lambda(2n)-1}{n+m}
&=\log\left(\frac2\pi\right)+H_m+\frac{2(-1)^m(2m)!\lambda(2m+1)}{\pi^{2m}} \\
&\quad+2m\sum_{j=1}^{m-1}(-1)^j\binom{2m-1}{2j-1}\frac{(2j-1)!2^{2j+1}}{\pi^{2j}(2^{2j+1}-1)}\lambda(2j+1),
\end{aligned}$$
where $H_m$ is given by equation (\ref{Ha}). 
\end{theorem}

\begin{theorem}\label{thm2}
For $m\in\mathbb N,$ we have
$$\begin{aligned}
\sum_{n=1}^\infty\frac{\lambda(2n)-1}{2n+m}
&=\frac12\log\left(\frac2\pi\right)+\frac{m}2\sum_{j=1}^{\left\lceil \frac{m-1}{2}\right\rceil }(-1)^j\binom{m-1}{2j-1}
     \frac{(2j-1)!2^{2j+1}}{\pi^{2j}(2^{2j+1}-1)}\lambda(2j+1) \\
&\quad-\frac{m!(2^{m+1}-2)}{2\pi^{m}(2^{m+1}-1)}\lambda(m+1)
                                      \cdot\begin{cases}
                                      (-1)^{\frac m2+1}&\text{if $m$ is even} \\
                                      0&\text{if $m$ is odd}
                                      \end{cases} \\
&\quad+\begin{cases}
               \frac12 H_{\frac m2}&\text{if $m$ is even}, \\
               \frac12H_{\frac{m-1}{2}}-\log2+\bar H_m&\text{if $m$ is odd},
               \end{cases}
\end{aligned}$$
where $\bar H_m$  is given by equation (\ref{al-Ha}).
\end{theorem}

From Theorems \ref{thm1} and \ref{thm2} we immediately get the following  explicit calculations, respectively.

\begin{example}\label{cor1}
\begin{enumerate}
\item[\rm(1)] $\displaystyle\sum_{n=1}^\infty\frac{\lambda(2n)-1}{n+1}=\log\left(\frac2\pi\right)+1-\frac4{\pi^2}\lambda(3).$
\item[\rm(2)] $\displaystyle\sum_{n=1}^\infty\frac{\lambda(2n)-1}{n+2}
=\log\left(\frac2\pi\right)+\frac{3}2-\frac{96}{7\pi^2}\lambda(3)+\frac{48}{\pi^4}\lambda(5).$
\item[\rm(3)] $\displaystyle\sum_{n=1}^\infty\frac{\lambda(2n)-1}{n+3}
=\log\left(\frac2\pi\right)+\frac{11}6-\frac{240}{7\pi^2}\lambda(3)+\frac{11520}{31\pi^4}\lambda(5)-\frac{1440}{\pi^6}\lambda(7).$
\end{enumerate}
\end{example}

\begin{example}\label{cor2}
\begin{enumerate}
\item[\rm(1)] $\displaystyle\sum_{n=1}^\infty\frac{\lambda(2n)-1}{2n+1}
=1-\frac12\log(2\pi).$
\item[\rm(2)] $\displaystyle\sum_{n=1}^\infty\frac{\lambda(2n)-1}{2n+2}
=\frac12\log\left(\frac2\pi\right)+\frac{1}2-\frac{2}{\pi^2}\lambda(3).$
\item[\rm(3)] $\displaystyle\sum_{n=1}^\infty\frac{\lambda(2n)-1}{2n+3}
=\frac43-\frac12\log(2\pi)-\frac{24}{7\pi^2}\lambda(3).$
\end{enumerate}
\end{example}

Then by combining Theorems \ref{thm1} and \ref{thm2} with Theorems 1.2 and 3.2 of \cite[p. 164 and p. 170]{MR} (see also (\ref{3}) and (\ref{3-1})),
we have the following corollaries.

\begin{corollary}\label{cor-SC-1}
For $m\in\mathbb N,$ we have
$$
\begin{aligned}
\sum_{n=1}^{\infty} \frac{\zeta(2n)}{2^{2n}(n+m)} 
&=\frac1{2m}-\log2-\frac{(-1)^{m}(2m)!(2^{2m+1}-1)}{(2\pi)^{2m}}\zeta(2m+1)  \\
&\quad+\sum_{j=1}^{m-1}(-1)^j\binom{2m}{2j}\frac{(2j)!}{(2\pi)^{2j}}(1-2^{2j})\zeta(2j+1).
\end{aligned}
$$
\end{corollary}

\begin{corollary}\label{cor-SC-2}
For $m\in\mathbb N,$ we have
$$
\begin{aligned}
\sum_{n=1}^{\infty} \frac{\zeta(2n)}{2^{2n}(2n+2m+1)} 
&=\frac1{2(2m+1)}-\frac12\log2   \\
&\quad+\frac{2m+1}{2}\sum_{j=1}^{m}(-1)^j\binom{2m}{2j-1}\frac{(2j-1)!(1-2^{2j})}{(2\pi)^{2j}}\zeta(2j+1).
\end{aligned}
$$
In partcular, 
$$\sum_{n=1}^{\infty} \frac{\zeta(2n)}{2^{2n}(2n+1)} =\frac12-\frac12\log2.$$
\end{corollary}

\begin{remark}\label{re-SC}
(1) Setting $m=1$ in Corollary \ref{cor-SC-1}, we have the following formula recently established by Srivastava and Choi \cite[p.~318, (532)]{SC}:
\[
\sum_{n=1}^{\infty} \frac{\zeta(2n)}{2^{2n}(n+1)} 
= \frac{1}{2} - \log 2 + \frac{7}{2\pi^2}\zeta(3)
= \frac{1}{2} + \log(2^{-1} \cdot B^{14}),
\]
where $B$ satisfies $\log B = \frac{\zeta(3)}{4\pi^2}$ as defined in \cite[p.~168, (29)]{SC}.

\vspace{1mm}
\noindent
(2) Direct verification shows that Corollaries \ref{cor-SC-1} and \ref{cor-SC-2} align with series representations in \cite[(3.15) and (3.20)]{Sri} (pp.~340--341), originally due to Srivastava. See also \cite[pp.~424--425, (8) and (12)]{SC}.
\end{remark}

In the subsequent work \cite{MR2}, the authors have evaluated the series
\[
\sum_{n=1}^\infty \binom{2n}{p} (\zeta(2n) - 1) \quad \text{and} \quad \sum_{n=1}^\infty n^p (\zeta(2n) - 1),
\]
we now establish analogous evaluations for the Dirichlet lambda function $\lambda(s)$. 

Consider the function defined in Lemma \ref{mr-lem}(1):
\begin{equation}\label{ma-s-re}
f(z) := 
\begin{cases} 
\displaystyle
\frac{1}{2}\pi z \tan(\pi z) - \frac{(2z)^2}{1 - (2z)^2} & \text{if $|z| < \frac{3}{2},\  z \not\in \left\{-\frac{1}{2}, 0, \frac{1}{2}\right\} $}, \\[10pt]
\displaystyle
\frac{1}{4} & \text{if $z = \pm \frac{1}{2} $}, \\[10pt]
0 & \text{if $z = 0 $}.
\end{cases}
\end{equation}
The following theorem establishes its power series expansion about $z = \frac{1}{2}$.
\begin{theorem}\label{thm3}
	Setting $\delta_n=\frac{1-(-1)^n}{2}$. If $\left|z-\frac12\right|<1,$  then we have 
	$$f(z)=\sum_{n=0}^\infty c_n \left(z-\frac12\right)^n,$$
	where 
	\begin{equation}\label{thm3.1}
		\begin{aligned}
			c_0:=\frac14, \quad c_n
			&:=\frac1{2^{\delta_n}}\left(
			\frac{2^{2\left\lceil \frac{n}{2}\right\rceil}}{2^{2\left\lceil \frac{n}{2}\right\rceil}-1} \right)
			\lambda\left(2\left\lceil \frac{n}{2}\right\rceil\right)
			-\frac{(-1)^n}{4}
		\end{aligned}
	\end{equation}
	and
	$$f^{(n)}\left(\frac12\right)=n!c_n.$$
\end{theorem}

\begin{remark}\label{rem-g}
	In \cite[Theorem 2.1]{MR2}, the authors have considered the following function
	\[
	g(z) := 
	\begin{cases} 
		\frac{1}{2}(1 - \pi z \cot(\pi z)) - \frac{z^2}{1 - z^2}  & \text{if } |z| < 2,\  z\notin \{-1, 0, 1\}, \\ 
		\frac{3}{4} & \text{if } z = \pm 1, \\ 
		0 & \text{if } z = 0,
	\end{cases}
	\]
	and got  the following power series expansion around $z=1$:
	\[
	g(z) = \sum_{n=0}^{\infty} a_n(z - 1)^n,
	\]
	where \( a_0 := \frac{3}{4} \) and
\begin{equation}\label{co-ap}
	a_n := 
	\begin{cases} 
		\zeta(n+1) + \frac{1}{2^{n+2}} & \text{if } n \text{ is odd}, \\ 
		\zeta(n) - \frac{1}{2^{n+2}} & \text{if } n \text{ is even}.
	\end{cases}
\end{equation}
	Moreover, the \( n \)-th derivative satisfies \( g^{(n)}(1) = n!\, a_n \) (see \cite[p. 937, Theorem 2.1]{MR2}).
\end{remark}

Now we are at the position to calculate the sums of the infinite series
$$\sum_{n=1}^\infty \binom{2n}{p}(\lambda(2n)-1).$$

\begin{theorem}\label{thm4}
	For $p\in\mathbb N,$ we have
	\[
	\sum_{n=1}^\infty \binom{2n}{p} (\lambda(2n) - 1) = \frac{1}{2^{2\left\lceil \frac{p}{2}\right\rceil} - 1} \lambda\left(2\left\lceil \frac{p}{2}\right\rceil\right) - \frac{(-1)^p}{2^{p+2}}.
	\]
\end{theorem}

\begin{example}
	For the first few values of $p,$ we have the following explicit calculations: 
	\begin{enumerate}
		\item[\rm(1)]
		$\displaystyle\sum_{n=1}^\infty n(\lambda(2n)-1)=\frac1{16}+\frac{\pi^2}{48}.$
		\item[\rm(2)]
		$\displaystyle\sum_{n=1}^\infty n(2n-1)(\lambda(2n)-1)=-\frac1{16}+\frac{\pi^2}{24}.$
		\item[\rm(3)]
		$\displaystyle\sum_{n=1}^\infty n(2n-1)(2n-2)(\lambda(2n)-1)=\frac3{32}+\frac{\pi^4}{480}.$
	\end{enumerate}
\end{example}

The above result implies:
\begin{corollary}\label{coro2}
	For $p\in\mathbb N,$ we have
	$$
	\sum_{n=1}^\infty \binom{2n}{p}\frac{\zeta(2n)}{2^{2n}}
	=\left(1-\frac1{2^{2\left\lceil \frac{p}{2}\right\rceil}}\right) \zeta\left(2\left\lceil \frac{p}{2}\right\rceil\right).
	$$
\end{corollary}

\begin{example}Corollary \ref{coro2} provide  explicit results for $\sum_{n=1}^\infty \binom{2n}{p}\frac{\zeta(2n)}{2^{2n}},$ where $p=1,2,3:$
	\begin{enumerate}
		\item[\rm(1)]
		$\displaystyle\sum_{n=1}^\infty n\frac{\zeta(2n)}{2^{2n}}=\frac{\pi^2}{16}.$
		\item[\rm(2)]
		$\displaystyle\sum_{n=1}^\infty n(2n-1)\frac{\zeta(2n)}{2^{2n}}=\frac{\pi^2}{8}.$
		\item[\rm(3)]
		$\displaystyle\sum_{n=1}^\infty n(2n-1)(2n-2)\frac{\zeta(2n)}{2^{2n}}=\frac{\pi^{4}}{32}.$
	\end{enumerate}
\end{example}

Let ${p \brace j}$ denote the Stirling numbers of the second kind, defined by (cf. \cite[p.~116, (9.11)]{QG})
\[
x^p = \sum_{j=0}^p \binom{x}{j} j! {p \brace j}.
\]
These numbers satisfy the recurrence relations:
\begin{equation}
\begin{aligned}
{p \brace j} &= {p-1 \brace j-1} + j{p-1 \brace j}, \\
{p \brace 1} &= 1.
\end{aligned}
\end{equation}

The following theorem evaluates the infinite series
\[
\sum_{n=1}^\infty n^p (\lambda(2n) - 1).
\]
\begin{theorem}\label{thm5}
	For $p\in\mathbb N,$ we have
	$$\sum_{n=1}^\infty n^p(\lambda(2n)-1)
	=\sum_{j=1}^p{p\brace j}j! 
	\left(\frac1{2^{2\left\lceil \frac{j}{2}\right\rceil}-1} \lambda\left(2\left\lceil \frac{j}{2}\right\rceil\right)-\frac{(-1)^j}{2^{j+2}}\right).$$
	\end{theorem}
\begin{example}
	For the first  few values of $p,$ we have the following explicit calculations: 
	\begin{enumerate}
		\item[\rm(1)]
		$\displaystyle\sum_{n=1}^\infty n(\lambda(2n)-1)=\frac1{16}+\frac{\pi^2}{48}.$
		\item[\rm(2)]
		$\displaystyle\sum_{n=1}^\infty n^2(\lambda(2n)-1)=\frac{\pi^2}{32}.$
		\item[\rm(3)]
		$\displaystyle\sum_{n=1}^\infty n^3(\lambda(2n)-1)=-\frac1{128}+\frac{7\pi^2}{192}+\frac{\pi^4}{1920}.$
	\end{enumerate}
\end{example}

The above result implies:
\begin{corollary}\label{coro3}
	For $p\in\mathbb N,$ we have
	$$
	\sum_{n=1}^\infty n^p\frac{\zeta(2n)}{2^{2n}}
	=\frac1{2^p}\sum_{j=1}^p{p\brace j}j! \left(1-\frac1{2^{2\left\lceil \frac{j}{2}\right\rceil}}\right) \zeta\left(2\left\lceil \frac{j}{2}\right\rceil\right).
	$$
\end{corollary}

\begin{example}Corollary \ref{coro3} provide  explicit results for $\sum_{n=1}^\infty n^p\frac{\zeta(2n)}{2^{2n}},$ where $p=1,2,3:$
	\begin{enumerate}
		\item[\rm(1)]
		$\displaystyle\sum_{n=1}^\infty n\frac{\zeta(2n)}{2^{2n}}=\frac{\pi^2}{16}.$
		\item[\rm(2)]
		$\displaystyle\sum_{n=1}^\infty n^2\frac{\zeta(2n)}{2^{2n}}=\frac{3\pi^2}{32}.$
		\item[\rm(3)]
		$\displaystyle\sum_{n=1}^\infty n^3\frac{\zeta(2n)}{2^{2n}}=\frac{7\pi^{2}}{64}+\frac{\pi^{4}}{128}.$
	\end{enumerate}
\end{example}

\section{Lemmas}\label{Lammas}

To prove our main results, we require the following four lemmas.
\begin{lemma} \label{lem1-thm1}
Let $l,m\in\mathbb N.$ Then
$$
\begin{aligned}
\int_0^{\frac12}x^{2m-1}\cos(2l\pi x) dx&=\frac{(-1)^l}{2^{2m}}\sum_{j=1}^{m-1}(-1)^{j-1}\binom{2m-1}{2j-1}(2j-1)!\frac{1}{(l\pi)^{2j}} \\
&\quad+(1-(-1)^l)\frac{(-1)^m(2m-1)!}{(2l\pi)^{2m}}.
\end{aligned}
$$
\end{lemma}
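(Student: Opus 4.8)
The plan is to evaluate the integral $I:=\int_0^{1/2}x^{2m-1}\cos(2k\pi x)\,dx$ by repeated integration by parts, exactly as one would for the integral $\int_0^1 x^{2m-1}\cos(2k\pi x)\,dx$ in \eqref{2}, but now keeping track of the boundary contributions at $x=\tfrac12$, which do not vanish here (this is the essential difference from the interval $[0,1]$ case). First I would set up the standard reduction: writing $\cos(2k\pi x)=\frac{d}{dx}\bigl(\frac{\sin(2k\pi x)}{2k\pi}\bigr)$ and integrating by parts, each step lowers the power of $x$ by one and alternates between producing a $\sin$ and a $\cos$ factor, with denominators accumulating powers of $2k\pi$. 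Carrying this out $2m-1$ times produces a finite sum of boundary terms evaluated at $x=\tfrac12$ and at $x=0$, plus a final elementary integral $\int_0^{1/2}\sin(2k\pi x)\,dx$ or $\int_0^{1/2}\cos(2k\pi x)\,dx$ depending on parity.

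Next I would organize the boundary terms. At $x=\tfrac12$ we have $\sin(2k\pi\cdot\tfrac12)=\sin(k\pi)=0$ and $\cos(2k\pi\cdot\tfrac12)=\cos(k\pi)=(-1)^k$; at $x=0$ the factor $x^{j}$ kills every boundary term except possibly the lowest-order one. So only the boundary terms carrying a $\cos$ factor survive at $x=\tfrac12$, and these occur after an even number of integrations by parts, i.e. they involve $x^{2m-1-2j+1}=x^{2m-2j}$ evaluated at $\tfrac12$ together with $(2\pi k)^{-2j}$ and a binomial-factorial coefficient $\binom{2m-1}{2j-1}(2j-1)!$ with sign $(-1)^{j-1}$; evaluating $x^{2m-2j}$ at $\tfrac12$ gives the factor $\frac{1}{2^{2m-2j}}$, which combines with $(k\pi)^{-2j}$ after absorbing $2^{2j}$ to yield the stated $\frac{(-1)^k}{2^{2m}}\sum_{j=1}^{m-1}(-1)^{j-1}\binom{2m-1}{2j-1}(2j-1)!\,(k\pi)^{-2j}$. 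The range $j=1,\dots,m-1$ arises because the $j=m$ term would require $x^0$ at $x=0$, which instead feeds the terminal integral. The terminal integral, after $2m-1$ steps, is (up to the accumulated coefficient $(-1)^{m-1}(2m-1)!\,(2\pi k)^{-(2m-1)}$) equal to $\int_0^{1/2}\sin(2k\pi x)\,dx=\frac{1-\cos(k\pi)}{2k\pi}=\frac{1-(-1)^k}{2k\pi}$; multiplying through gives the second displayed term $(1-(-1)^k)\frac{(-1)^m(2m-1)!}{(2k\pi)^{2m}}$, where one sign flip converts $(-1)^{m-1}$ and the extra $\frac{1}{2k\pi}$ merges the exponent to $2m$.

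The main obstacle is bookkeeping: correctly tracking the alternating signs, the descending binomial coefficients $\binom{2m-1}{2j-1}$, the falling factorials collapsing to $(2j-1)!$, and the parity-dependent survival of boundary terms, so that the $2^{2m}$ normalization and the $(-1)^k$ versus $(1-(-1)^k)$ split come out exactly as stated. A clean way to avoid an error-prone hand computation is to prove the formula by induction on $m$: establish the base case $m=1$ directly (where the first sum is empty and $I=\int_0^{1/2}x\cos(2k\pi x)\,dx=(1-(-1)^k)\frac{-1}{(2k\pi)^2}$, matching with $(-1)^1$), then perform exactly two integrations by parts to reduce the $x^{2m-1}$ case to the $x^{2m-3}$ case, checking that the two new boundary terms produced are precisely the $j$-dependent summands that get appended and that the terminal-integral term is reproduced with its coefficient correctly updated. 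Alternatively, one may simply differentiate under the integral sign in Proposition~\ref{pro-FS}(2)-type generating identities, but the direct integration-by-parts-with-induction route is the most transparent and is what I would write up.
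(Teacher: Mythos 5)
Your proposal is correct, but it is packaged differently from the paper's proof. The paper does not redo any integration by parts: it simply quotes the closed-form evaluation $\int_0^{1/2}x^{n}\cos(ax)\,dx=\sum_{j=0}^{n}j!\binom{n}{j}\frac{2^{j-n}}{a^{j+1}}\sin\left(\frac a2+\frac{j\pi}{2}\right)-\frac{n!}{a^{n+1}}\sin\left(\frac{n\pi}{2}\right)$ from \cite[Lemma 4.1]{SK2025} (equivalently \cite[2.633(2)]{GR}), substitutes $n=2m-1$, $a=2k\pi$, and uses $\sin\left(k\pi+\frac{j\pi}{2}\right)=(-1)^k\sin\left(\frac{j\pi}{2}\right)$ so that only odd indices $j=2j'-1$ survive; the $j'=m$ term of the resulting sum is then merged with the standalone $(-1)^m(2m-1)!(2k\pi)^{-2m}$ term to produce the factor $1-(-1)^k$ in the statement. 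You instead re-derive the identity from scratch by iterated integration by parts, with induction on $m$ (two integrations by parts per step) as the clean formulation. That inductive scheme does check out: the new boundary term at $x=\frac12$ is exactly the $j=1$ summand, and the recursion multiplies $I_{m-1}$ by $-(2m-1)(2m-2)/(2k\pi)^2$, which converts $\binom{2m-3}{2j-3}(2j-3)!$ into $\binom{2m-1}{2j-1}(2j-1)!$ after reindexing and updates the terminal term with the correct sign, with your base case $m=1$ computed correctly. So your argument is more elementary and self-contained (no reliance on the cited antiderivative formula), at the cost of exactly the bookkeeping you flag: for instance, the accumulated coefficient you state for the terminal integral, $(-1)^{m-1}(2m-1)!(2k\pi)^{-(2m-1)}$, is off by a sign as written (at $m=1$ the true coefficient is $-1/(2k\pi)$), a slip the inductive write-up would eliminate. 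The paper's route buys brevity by outsourcing all of this to the known formula; yours buys transparency and independence from \cite{SK2025}.
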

\begin{proof}
Using the integral formula from \cite[Lemma 4.1, p. 301]{SK2025}, we derive the following identity:
\begin{equation*}
\int_0^{\frac{1}{2}} x^n \cos(ax) dx = \sum_{j=0}^n j! \binom{n}{j} \frac{2^{j-n}}{a^{j+1}} \sin\left(\frac{a}{2} + \frac{j\pi}{2}\right) 
- n! \frac{1}{a^{n+1}} \sin\left(\frac{n\pi}{2}\right).
\end{equation*}
This integral representation can also be found in \cite[2.633(2), p. 226]{GR}.
Then substituting $n = 2m-1$ and $a = 2\pi l$ into the above formula, we have
\begin{equation*}
\begin{aligned}
\int_0^{\frac{1}{2}} x^{2m-1} \cos(2l\pi x) dx 
&= \frac{1}{2^{2m}} \sum_{j=0}^{2m-1} j! \binom{2m-1}{j} \frac{1}{(l\pi)^{j+1}} \sin\left(k\pi + \frac{j\pi}{2}\right) \\
&\quad - (2m-1)! \frac{1}{(2l\pi)^{2m}} \sin\left(\frac{(2m-1)\pi}{2}\right) \\
&= \frac{(-1)^l}{2^{2m}} \sum_{j=0}^{2m-1} j! \binom{2m-1}{j} \frac{1}{(l\pi)^{j+1}} \sin\left(\frac{j\pi}{2}\right) \\
&\quad - (2m-1)! \frac{(-1)^{m+1}}{(2l\pi)^{2m}} \\
&= \frac{(-1)^l}{2^{2m}} \sum_{j=1}^{m} (-1)^{j-1} \binom{2m-1}{2j-1} (2j-1)! \frac{1}{(l\pi)^{2j}} \\
&\quad + (2m-1)! \frac{(-1)^m}{(2l\pi)^{2m}},
\end{aligned}
\end{equation*}
which completes the proof.
\end{proof}

\begin{lemma} \label{lem2-thm1}
For all integers $m\geq0,$ we have
$$\lim_{x\to\frac12}\left( -(2x)^{2m}\log\cos(\pi x)+\log(1-(2x)^2) \right)=\log\left(\frac4\pi\right).$$
\end{lemma}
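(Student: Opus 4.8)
The plan is to split off the $m=0$ case by a telescoping-type factorization and then reduce everything to one elementary limit. First I would write, for $x$ near $\tfrac12$,
\[
-(2x)^{2m}\log\cos(\pi x)+\log(1-(2x)^2)
=\Bigl(-\log\cos(\pi x)+\log(1-(2x)^2)\Bigr)+\bigl(1-(2x)^{2m}\bigr)\log\cos(\pi x),
\]
and show that the first bracket tends to $\log(4/\pi)$ while the second term tends to $0$.

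For the first bracket I would note that it equals $\log\dfrac{1-(2x)^2}{\cos(\pi x)}$, and substitute $x=\tfrac12-t$, so that $\cos(\pi x)=\sin(\pi t)$ and $1-(2x)^2=4t(1-t)$. Then
\[
\frac{1-(2x)^2}{\cos(\pi x)}=\frac{4t(1-t)}{\sin(\pi t)}=\frac{4(1-t)}{\pi}\cdot\frac{\pi t}{\sin(\pi t)}\xrightarrow[t\to0^+]{}\frac{4}{\pi},
\]
by the standard limit $\lim_{u\to0}\frac{\sin u}{u}=1$ together with continuity of $\log$; this already handles the case $m=0$. For the second term I would use the factorization $1-(2x)^{2m}=(1-(2x)^2)\sum_{j=0}^{m-1}(2x)^{2j}$, valid for $m\ge 1$ (the term being identically $0$ when $m=0$). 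The finite sum $\sum_{j=0}^{m-1}(2x)^{2j}$ is continuous and bounded near $x=\tfrac12$, while, with $x=\tfrac12-t$ again and using $\log\sin(\pi t)=\log t+\log\pi+o(1)$,
\[
(1-(2x)^2)\log\cos(\pi x)=4t(1-t)\log\sin(\pi t)\xrightarrow[t\to0^+]{}0
\]
because $t\log t\to0$. Hence $\bigl(1-(2x)^{2m}\bigr)\log\cos(\pi x)\to0$, and adding the two contributions gives the claimed value $\log(4/\pi)$.

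The computation is elementary throughout, so I do not expect a genuine obstacle; the only point requiring a moment's care is the second term, where one must observe that the vanishing factor $1-(2x)^{2m}$, which is of order $1-(2x)^2=O(t)$, overpowers the logarithmic blow-up $\log\cos(\pi x)=\log t+O(1)$. This is precisely why no delicate cancellation between the two divergent bracketed quantities is needed, and why the answer is independent of $m$.
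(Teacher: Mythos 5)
Your proof is correct and follows essentially the same route as the paper: both regroup the expression so that the main contribution is the logarithm of the ratio of $1-(2x)^2$ to $\cos(\pi x)$ (which tends to $4/\pi$), while the remainder carries the factor $1-(2x)^{2m}$, whose first-order vanishing kills the logarithmic blow-up. The differences are only cosmetic: the paper pairs that vanishing factor with $\log(1-(2x)^2)$ and evaluates the two limits by L'Hospital's rule, whereas you pair it with $\log\cos(\pi x)$ and use the substitution $x=\tfrac12-t$ together with the elementary limits $\sin u/u\to1$ and $t\log t\to0$.
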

\begin{proof}
Observe the following algebraic manipulation for $x$ near $\frac{1}{2}$:
\begin{equation}\label{4.1}
\begin{aligned}
-(2x)^{2m}\log\cos(\pi x) &+ \log(1 - (2x)^2) \\
&= -(2x)^{2m} \log\left(\frac{\cos(\pi x)}{1 - (2x)^2}\right) + \left(1 - (2x)^{2m}\right)\log(1 - (2x)^2) \\
&= -(2x)^{2m} \log\left(\frac{\cos(\pi x)}{1 - (2x)^2}\right) + \frac{\log(1 - (2x)^2)}{\frac{1}{1 - (2x)^{2m}}}.
\end{aligned}
\end{equation}
To evaluate the limit as $x \to \frac{1}{2}$, we apply L'Hospital's Rule to each term separately and get
\begin{equation}
\lim_{x\to\frac{1}{2}} \frac{\cos(\pi x)}{1 - (2x)^2} = \frac{\pi}{4}
\quad\text{and}\quad
\lim_{x\to\frac{1}{2}} \frac{\log(1 - (2x)^2)}{\frac{1}{1 - (2x)^{2m}}} = 0,
\end{equation}
thus by (\ref{4.1}) we have
\begin{equation}
\lim_{x\to\frac{1}{2}}\left(-(2x)^{2m}\log\cos(\pi x) + \log(1 - (2x)^2)\right)= -(1)^{2m}\log\left(\frac{\pi}{4}\right) + 0 
=\log\left(\frac4{\pi}\right),
\end{equation}
which is the desired result.
\end{proof}

To prove Theorem \ref{thm2}, we need the following lemma.

\begin{lemma} \label{lem3-thm1}
For all integers $m\geq0,$ we have
$$
\int_0^{\frac12}\frac{(2x)^{m+1}-2x}{1-(2x)^2}dx
=\begin{cases}
-\frac14 H_{\frac m2} &\text{if $m$ is even}, \\
-\frac14H_{\frac{m-1}{2}}-\frac12\bar H_m +\frac12\log2 &\text{if $m$ is odd}.
\end{cases}
$$
\end{lemma}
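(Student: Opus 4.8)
The plan is to normalize the interval by the substitution $t=2x$, after which the integral becomes a multiple of the quantity $R(m)$ already appearing in Theorem~\ref{Theorem 1.2}, and then to evaluate it by elementary polynomial division.

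\emph{Step 1 (substitution).} Putting $t=2x$ turns the integral into
$$
\int_0^{\frac12}\frac{(2x)^{m+1}-2x}{1-(2x)^2}\,dx=\frac12\int_0^1\frac{t^{m+1}-t}{1-t^2}\,dt=\frac14\,R(m),
$$
with $R(m)$ exactly as defined in Theorem~\ref{Theorem 1.2}. One may simply quote the value of $R(m)$ from there and use $\tfrac14\log4=\tfrac12\log2$ to obtain the claimed formula; for completeness I would also give the short direct computation of $\int_0^1\frac{t^{m+1}-t}{1-t^2}\,dt$ below.

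\emph{Step 2 (polynomial division).} Write $\dfrac{t^{m+1}-t}{1-t^2}=-\,t\cdot\dfrac{1-t^{m}}{1-t^2}$ and split on the parity of $m$. If $m=2\ell$ is even, then multiplying out shows
$$
\frac{t^{m+1}-t}{1-t^2}=-\sum_{j=0}^{\ell-1}t^{2j+1},\qquad\text{hence}\qquad\int_0^1\frac{t^{m+1}-t}{1-t^2}\,dt=-\sum_{j=0}^{\ell-1}\frac1{2j+2}=-\tfrac12 H_{\ell},
$$
and multiplying by $\tfrac12$ gives $-\tfrac14 H_{m/2}$ (the degenerate case $m=0$ is included since the integrand is identically $0$ and $H_0=0$). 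If $m=2\ell+1$ is odd, clearing denominators verifies the identity
$$
\frac{t^{m+1}-t}{1-t^2}=\frac1{1+t}-\sum_{i=0}^{\ell}t^{2i},\qquad\text{so}\qquad\int_0^1\frac{t^{m+1}-t}{1-t^2}\,dt=\log2-\sum_{i=0}^{\ell}\frac1{2i+1}.
$$

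\emph{Step 3 (harmonic-number bookkeeping).} It remains to rewrite $\sum_{i=0}^{\ell}\frac1{2i+1}$ in terms of $H$ and $\bar H$. Separating $H_{2\ell+1}$ into its odd- and even-indexed parts gives $\sum_{i=0}^{\ell}\frac1{2i+1}=H_{2\ell+1}-\tfrac12 H_{\ell}$, and likewise $\bar H_{2\ell+1}=\sum_{\text{odd }k\le 2\ell+1}\tfrac1k-\sum_{\text{even }k\le 2\ell}\tfrac1k=H_{2\ell+1}-H_{\ell}$; eliminating $H_{2\ell+1}$ yields $\sum_{i=0}^{\ell}\frac1{2i+1}=\bar H_{2\ell+1}+\tfrac12 H_{\ell}$. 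Substituting this into the displayed integral and multiplying by $\tfrac12$ produces $-\tfrac14 H_{(m-1)/2}-\tfrac12\bar H_m+\tfrac12\log2$, which is the odd-$m$ case of the claim. There is no genuine obstacle here; the only point requiring care is Step~3 — keeping the three related sums ($\sum 1/(2i+1)$, $H_\ell$, $\bar H_{2\ell+1}$) straight so that, once the factor $\tfrac12$ from the substitution is taken into account, the constant comes out as $\tfrac12\log2$ rather than $\log4$ — and one should sanity-check the formula against the small cases $m=0,1$.
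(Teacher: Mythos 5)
Your proof is correct and takes essentially the same approach as the paper: the substitution $t=2x$ reduces the integral to $\tfrac14 R(m)$, which is precisely how the paper argues (it simply cites Lemma 3.1 of Mortini--Rupp, i.e.\ the evaluation of $R(m)$ quoted in Theorem~\ref{Theorem 1.2}). Your Steps 2--3, verifying the value of $R(m)$ by polynomial division and the odd/even split of $H_{2\ell+1}$ and $\bar H_{2\ell+1}$, just make self-contained what the paper delegates to that citation, and the bookkeeping (including the constant $\tfrac14\log 4=\tfrac12\log 2$) checks out.
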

\begin{proof}
The result follows immediately by letting $x\to 2x$ in Lemma 3.1 of \cite[p. 169]{MR} 
 \end{proof}
 
The following lemma is an analogy of \cite[Lemma 2.1]{MR} for the Dirichlet lambda function $\lambda(s)$.

\begin{lemma}\label{mr-lem}
The following assertions hold:
\begin{enumerate}
\item[\rm(1)] The series $\sum_{n=1}^\infty(\lambda(2n)-1)(2z)^{2n}$ converges for $|z|<\frac32$ to
$$\begin{cases}
\displaystyle
\frac12\pi z\tan(\pi z)-\frac{(2z)^2}{1-(2z)^2}&\text{if $|z|<\frac32,\; z\not\in\left\{-\frac12,0,\frac12 \right\}$}, \\
\displaystyle
\frac14 &\text{if } z=\pm\frac12, \\
\displaystyle
0&\text{if } z=0,
\end{cases}$$
and diverges for $|z|\geq\frac32.$
\item[\rm(2)] The series $\displaystyle\sum_{n=1}^\infty\frac{\lambda(2n)-1}{n+z}$ converges for $z\in\mathbb C\setminus \{-1,-2,-3,\ldots\}.$
\end{enumerate}
\end{lemma}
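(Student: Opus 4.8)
The plan is to derive both convergence statements directly from the known analytic behaviour of $\lambda(2n)$ as $n\to\infty$, combined with the generating-function identity that Proposition~\ref{pro-FS}(1) supplies. First I would record the elementary estimate $\lambda(2n)-1=\frac{1}{3^{2n}}+O(5^{-2n})$ as $n\to\infty$; more precisely $0<\lambda(2n)-1<\frac{2}{3^{2n}}$ for all $n\geq1$, since $\lambda(2n)-1=\sum_{k\geq2}(2k+1)^{-2n}$ is a positive series dominated term-by-term by a geometric series with ratio $(3/5)^{2n}\le 9/25$. Hence $(\lambda(2n)-1)(2x)^{2n}\sim (2x/3)^{2n}$, and the root test gives radius of convergence exactly $\frac32$ in $x$ for part~(1), with divergence for $|x|\ge\frac32$ because the terms do not tend to $0$ there (at $|x|=\frac32$ the $n$-th term tends to $1$). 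For part~(2), $\bigl|\frac{\lambda(2n)-1}{n+x}\bigr|\le \frac{2}{3^{2n}|n+x|}$, which is summable for every $x\notin\{-1,-2,-3,\dots\}$ by direct comparison with $\sum 3^{-2n}$ (the finitely many small denominators near a would-be pole are harmless since $x$ avoids the negative integers and $|n+x|$ is bounded below by a positive constant for $n$ large).

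Next I would identify the sum in part~(1) for $x\notin\{-\frac12,0,\frac12\}$. Using $\lambda(2n)=\frac{2^{2n}}{2^{2n}-1}\zeta_E(2n)$ — equivalently $2^{2n}\lambda(2n)=\sum_{k\ge1}(k+\tfrac12)^{-2n}$ from~(\ref{lam-def}) — I would write
\[
\sum_{n=1}^\infty \lambda(2n)(2x)^{2n}=\sum_{n=1}^\infty\sum_{k=0}^\infty\frac{x^{2n}}{(k+\frac12)^{2n}}
=\sum_{k=0}^\infty\frac{x^2}{(k+\frac12)^2-x^2},
\]
valid for $|x|<\frac12$ by absolute convergence (reindexing $k\mapsto k$ starting at $0$ to pick up the $n=1,k=0$ contribution correctly), and then recognise the right-hand side via the partial-fraction expansion
\[
\pi x\tan(\pi x)=\sum_{k=0}^\infty\frac{2x^2}{(k+\frac12)^2-x^2},
\]
which is exactly Proposition~\ref{pro-FS}(1) after multiplying by $x$ and regrouping $n=2k+1$. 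Subtracting the $n=1$ term $\sum_n(2x)^{2n}=\frac{(2x)^2}{1-(2x)^2}$ from $\sum_n\lambda(2n)(2x)^{2n}=\frac12\pi x\tan(\pi x)$ yields the claimed closed form on $|x|<\frac12$, and then analytic continuation (both sides are meromorphic on $|x|<\frac32$, agreeing on an open subset) extends it to all $x\notin\{-\frac12,0,\frac12\}$ in the disk. The special values $x=0$ and $x=\pm\frac12$ are handled separately: at $x=0$ every term vanishes; at $x=\pm\frac12$ the series is $\sum_{n\ge1}(\lambda(2n)-1)$, which by Proposition~\ref{pro1}(1) equals $\frac14$ (alternatively one may quote the telescoping computation $\sum_{k\ge2}\frac{1}{(2k-1)^2-1}=\frac14$ as in the proof of that proposition).

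The main obstacle is not any single hard estimate but rather the bookkeeping of the exceptional point $x=\pm\frac12$: there the generating function $\frac12\pi x\tan(\pi x)-\frac{(2x)^2}{1-(2x)^2}$ is an indeterminate $\infty-\infty$, so one must check that the two singular terms cancel and that the finite limit coincides with the value $\frac14$ of the (still convergent) series. This is precisely the content of Lemma~\ref{lem2-thm1} with $m=0$, which gives $\lim_{x\to1/2}\bigl(-\log\cos(\pi x)+\log(1-(2x)^2)\bigr)=\log(4/\pi)$; combining this with $\lim_{x\to1/2}\frac12\pi x\tan(\pi x)\big/\bigl(\text{something}\bigr)$ — more cleanly, rewriting $\frac12\pi x\tan(\pi x)-\frac{(2x)^2}{1-(2x)^2}$ as a single ratio and applying L'Hôpital once — shows the limit equals $\frac14$, matching the series value. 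I would therefore present part~(1) in three cases, doing the $|x|<\frac12$ identification first by the double-sum interchange above, then continuing analytically, then disposing of $x=0$ trivially and $x=\pm\frac12$ via the cancellation just described; part~(2) is then a one-line comparison argument using the exponential decay of $\lambda(2n)-1$.
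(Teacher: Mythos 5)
Your argument is correct and follows essentially the same route as the paper: expand $\lambda(2n)$ as a sum over odd integers, interchange the double series for $|x|<\tfrac12$, identify the sum with $\tfrac12\pi x\tan(\pi x)-\tfrac{(2x)^2}{1-(2x)^2}$ via Proposition~\ref{pro-FS}(1), pass to all of $|x|<\tfrac32$ by analyticity (the paper does this through the removable singularities of $h$ and the local uniform convergence of its Taylor series, you through the root test plus the identity theorem), evaluate $x=\pm\tfrac12$ by the same telescoping sum that underlies Proposition~\ref{pro1}(1), and obtain divergence for $|x|\geq\tfrac32$ and part (2) from $\lambda(2n)-1\sim 9^{-n}$. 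Only cosmetic slips: the parenthetical relation should read $\lambda(2n)=\frac{2^{2n}-1}{2^{2n}-2}\zeta_E(2n)$ (what you actually use is the correct series representation, so nothing breaks), the bound $\lambda(2n)-1<2\cdot 9^{-n}$ fails at $n=1$ (any larger fixed constant, or restricting to $n\geq2$, suffices), and Lemma~\ref{lem2-thm1} concerns the logarithmic combination used later in Theorem~\ref{thm1}, so the cancellation of $h$ at $x=\pm\tfrac12$ is better justified by the direct L'H\^opital computation you indicate (or is not needed at all, since the series value $\tfrac14$ there follows from the telescoping sum alone).
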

\begin{proof}
(1)
Following the methodology in \cite[Lemma 2.1, p. 165]{MR}, we begin with Proposition \ref{pro-FS}(1) which establishes
\begin{equation*}
\pi z \tan(\pi z) = \sum_{n=1}^\infty \frac{2(2z)^2}{(2n-1)^2 - (2z)^2}, \quad 2z \in \mathbb{C} \setminus (2\mathbb{Z} + 1).
\end{equation*}
For $0 < |z| < \frac{1}{2}$, leveraging absolute convergence to rearrange terms, we define
\begin{equation}\label{def-s-h}
\begin{aligned}
S(z) &:= \sum_{n=1}^\infty (\lambda(2n)-1)(2z)^{2n} \\
&= \sum_{m=1}^\infty \sum_{n=1}^\infty \left(\frac{(2z)^2}{(2m-1)^2}\right)^n - \frac{(2z)^2}{1 - (2z)^2} \\
&= \sum_{m=1}^\infty \frac{(2z)^2}{(2m-1)^2 - (2z)^2} - \frac{(2z)^2}{1 - (2z)^2} \\
&= \sum_{m=2}^\infty \frac{(2z)^2}{(2m-1)^2 - (2z)^2} =: s(z) \\
&= \frac{1}{2}\pi z \tan(\pi z) - \frac{(2z)^2}{1 - (2z)^2} =: h(z).
\end{aligned}
\end{equation}
Note that $h(z)$ is holomorphic in the domain
\[
 \left\{ z \in \mathbb{C} \mid 2z \not\in 2\mathbb{Z} + 1 \right\}.
\]
Since the points $0$ and $\pm\frac{1}{2}$ are removable singularities,
 $h(z)$ admits a holomorphic extension to $|z| < \frac{3}{2}$
with vlaues $h(0) = 0$ and $h\left(\pm\frac{1}{2}\right) = \frac{1}{4}$ (matching $s(z)$ at these points),
and so its Taylor series expansion  converges locally uniformly for $|z| < \frac{3}{2}$. Since
\[
\lim_{n \to \infty} \left( \lambda(2n) - 1 \right) 9^n = 1,
\]
the function \( S(z) \) diverges for all \( |z| \geq \frac{3}{2} \).  
To establish the above limit, we start with the expression
\[
9^n \left( \lambda(2n) - 1 \right) = \sum_{j=2}^{\infty} \left( \frac{3}{2j - 1} \right)^{2n}.
\]
For \( j = 2 \), we have \( \frac{3}{2 \cdot 2 - 1} = \frac{3}{3} = 1 \), so the first term is \( 1^{2n} = 1 \). Thus we can write
\[
9^n \left( \lambda(2n) - 1 \right) = 1 + \sum_{j=3}^{\infty} \left( \frac{3}{2j - 1} \right)^{2n}.
\]
Now consider the series \( \sum_{j=3}^{\infty} \left( \frac{3}{2j - 1} \right)^{2n} \). For \( j \geq 3 \), we have \( \left| \frac{3}{2j - 1} \right| \leq \frac{3}{5} < 1 \), and the terms satisfy
\[
\left| \left( \frac{3}{2j - 1} \right)^{2n} \right| \leq \left( \frac{3}{2j - 1} \right)^2.
\]
Since \( \sum_{j=3}^{\infty} \left( \frac{3}{2j - 1} \right)^2 \) converges (as its terms are asymptotically proportional to \( j^{-2} \)) and is independent of \( n \), the series converges uniformly in \( n \) by the Weierstrass M-test.  

Therefore, we may interchange the limit and summation:
\[
\lim_{n \to \infty} \sum_{j=3}^{\infty} \left( \frac{3}{2j - 1} \right)^{2n} = \sum_{j=3}^{\infty} \lim_{n \to \infty} \left( \frac{3}{2j - 1} \right)^{2n}.
\]
For each fixed \( j \geq 3 \), since \( \left| \frac{3}{2j - 1} \right| < 1 \), we have \( \lim_{n \to \infty} \left( \frac{3}{2j - 1} \right)^{2n} = 0 \). Consequently,
\[
\sum_{j=3}^{\infty} \lim_{n \to \infty} \left( \frac{3}{2j - 1} \right)^{2n} = \sum_{j=3}^{\infty} 0 = 0.
\]
Combining these results yields
\[
\lim_{n \to \infty} 9^n \left( \lambda(2n) - 1 \right) = \lim_{n \to \infty} \left( 1 + \sum_{j=3}^{\infty} \left( \frac{3}{2j - 1} \right)^{2n} \right) = 1 + 0 = 1,
\]
which completes the proof.
\noindent

(2)
The result follows directly from Part (1) combined with the growth estimate: for fixed $z \in \mathbb{C}$ and sufficiently large $n$,
\[
|n + z| \geq \frac{n}{2} \geq 1.
\]
This polynomial growth dominates the exponential decay in the series coefficients, ensuring convergence.
\end{proof}

\section{Proofs of the main results}\label{proofs}

In this section, we shall prove our main results which have been stated in Section~\ref{main results}.

\subsection*{Proof of Proposition \ref{pro1}}
\noindent
(1) By Lemma \ref{mr-lem}(1) in the case of $z=\frac{1}{2}$, we have
	\begin{equation}
		\sum_{n=1}^\infty (\lambda(2n)-1)= \frac{1}{4}, 
			\end{equation}
which is the desired result.
	\vspace{0.5em}
\noindent

(2) By (\ref{def-s-h}), we have
	\begin{equation}\label{5.2}
		h(x) = \sum_{n=1}^\infty (\lambda(2n)-1)(2x)^{2n} 
		= \frac{1}{2}\pi x \tan(\pi x) - \frac{(2x)^2}{1 - (2x)^2}.
	\end{equation}
In the following, we interpret the symbol $\left[ f(x) \right]_0^{\frac{1}{2}}$ as the evaluation
	\begin{equation}
		\left[ f(x) \right]_0^{\frac{1}{2}} 
		= \lim_{x \to \frac{1}{2}^-} f(x) - \lim_{x \to 0^+} f(x).
	\end{equation}
	
\noindent
Dividing (\ref{5.2}) by $x$ and integrating over the interval $[0, \frac{1}{2}]$, we have
	\begin{equation}
		\begin{aligned}
			\sum_{n=1}^\infty \frac{\lambda(2n)-1}{2n}
			&= \left[ -\frac{1}{2}\log\cos(\pi x) + \frac{1}{2}\log(1 - 4x^2) \right]_0^{\frac{1}{2}} \\
			&= \frac{1}{2} \lim_{x \to \frac{1}{2}^-} \left( -\log\cos(\pi x) + \log(1 - 4x^2) \right) \\
			&= \frac{1}{2}\log\left( \frac{4}{\pi} \right),
		\end{aligned}
	\end{equation}
which completes the proof.
\hfill $\square$

\subsection*{Proof of Theorem \ref{thm1}}
By Lemma \ref{mr-lem} (1) and multiplication by $(2x)^{2m-1}$, we have
\begin{equation*}
\sum_{n=1}^\infty (\lambda(2n)-1)(2x)^{2n+2m-1} = \frac{\pi}{4} (2x)^{2m} \tan(\pi x) - \frac{(2x)^{2m+1}}{1 - (2x)^2}.
\end{equation*}
Since the series is uniform convergence on the interval  $\left[0, \frac{1}{2}\right]$, we integrate it term-by-term and obtain
\begin{equation}\label{thm1.1}
\sum_{n=1}^\infty \frac{\lambda(2n)-1}{n+m} = \int_0^{\frac{1}{2}} \left( \pi(2x)^{2m}\tan(\pi x) - \frac{4(2x)^{2m+1}}{1 - (2x)^2} \right) dx.
\end{equation}
While $\int_0^{\frac{1}{2}} \pi(2x)^{2m}\tan(\pi x)dx$ is divergent, we compute its antiderivative via integration by parts:
\begin{equation}\label{int-tan}
\int \pi(2x)^{2m}\tan(\pi x)dx = -(2x)^{2m}\log\cos(\pi x) + m2^{2m} \int (2x)^{2m-1}\log\cos(\pi x)dx.
\end{equation}
Recall the Fourier series for $\log\cos(\pi x)$ from \cite[1.441(4), p. 46]{MR}:
\begin{equation*}
\log\cos(\pi x) = -\log 2 + \sum_{k=1}^\infty (-1)^{k-1} \frac{\cos(2k\pi x)}{k} \quad \text{for } |x| < \frac{1}{2}.
\end{equation*}
Substituting this into (\ref{int-tan}) yields
\begin{equation}\label{int-tan-se}
\begin{aligned}
\int \pi(2x)^{2m}\tan(\pi x)dx &= -(2x)^{2m}\log\cos(\pi x) - (\log 2)(2x)^{2m} \\
&\quad + m2^{2m+1} \sum_{k=1}^\infty \frac{(-1)^{k-1}}{k} \int x^{2m-1}\cos(2k\pi x)dx.
\end{aligned}
\end{equation}
Since 
\begin{equation}\label{int-frac}
\frac{4(2x)^{2m+1}}{1 - (2x)^2} = -4\sum_{k=0}^{m-1} (2x)^{2k+1} + \frac{8x}{1 - (2x)^2},
\end{equation}
we have
\begin{equation*}
\int \frac{4(2x)^{2m+1}}{1 - (2x)^2}dx = -\sum_{k=0}^{m-1} \frac{(2x)^{2k+2}}{k+1} - \log(1 - (2x)^2) + C,
\end{equation*}
where $C$ is a constant of integration.
Denote the right hand side of (\ref{thm1.1}) by
 $$I(m) := \int_0^{\frac{1}{2}} \left( \pi(2x)^{2m}\tan(\pi x) - \frac{4(2x)^{2m+1}}{1 - (2x)^2} \right) dx.$$
Then by combining (\ref{int-tan}), (\ref{int-tan-se}) and (\ref{int-frac}), we have
\begin{equation*}
\begin{aligned}
I(m) &= \left[ -(2x)^{2m}\log\cos(\pi x) + \log(1 - (2x)^2) - (\log 2)(2x)^{2m} + \sum_{k=0}^{m-1} \frac{(2x)^{2k+2}}{k+1} \right]_0^{\frac{1}{2}} \\
&\quad + m2^{2m+1} \sum_{k=1}^\infty \frac{(-1)^{k-1}}{k} \int_0^{\frac{1}{2}} x^{2m-1}\cos(2k\pi x)dx.
\end{aligned}
\end{equation*}
Finally,  applying Lemmas \ref{lem1-thm1} and \ref{lem2-thm1}, we obtain
\begin{equation*}
\begin{aligned}
I(m) &= \log\left(\frac{4}{\pi}\right) - \log 2 + H_m + \frac{(-1)^m (2m)!}{\pi^{2m}} (\zeta_E(2m+1) + \zeta(2m+1)) \\
&\quad + 2m \sum_{j=1}^{m-1} (-1)^j \binom{2m-1}{2j-1} (2j-1)! \frac{1}{\pi^{2j}} \zeta(2j+1),
\end{aligned}
\end{equation*}
where $H_m$ denotes the $m$-th harmonic number. This matches the desired result through identities (\ref{zeta-lam1}) and (\ref{zeta-lam2}).
\hfill $\square$

\subsection*{Proof of Theorem \ref{thm2}}
Using Lemma \ref{mr-lem}(1) and multiplying by $(2x)^{m-1}$, we need to evaluate the series
\begin{equation}\label{ser-2n+m}
\sum_{n=1}^\infty (\lambda(2n) - 1)(2x)^{2n+m-1} = \frac{\pi}{4} (2x)^m \tan(\pi x) - \frac{(2x)^{m+1}}{1 - (2x)^2}.
\end{equation}

The remainder of the proof parallels that of Theorem \ref{thm1}, but requires the following integral identity \cite[Theorem 2.3]{SK2025}:
\begin{equation}
\begin{aligned}
m \int_0^{\frac{1}{2}} x^{m-1} \log \cos(\pi x)  dx 
&= -\frac{\log 2}{2^m} - \frac{m}{2^m} \sum_{j=0}^{m-1} j! \binom{m-1}{j} \frac{\sin\left(\frac{j\pi}{2}\right)}{\pi^{j+1}} \zeta(j+2) \\
&\quad - \frac{m!}{(2\pi)^m} \zeta_E(m+1) \cdot 
\begin{cases} 
0 & \text{if $m$ is odd}, \\ 
(-1)^{\frac{m}{2}+1} & \text{if $m$ is even}.
\end{cases}
\end{aligned}
\end{equation}

By applying Lemma \ref{lem2-thm1}, Lemma \ref{lem3-thm1}, and the above integral, we obtain
\begin{equation}
\begin{aligned}
\sum_{n=1}^\infty \frac{\lambda(2n) - 1}{2n + m}
&= \frac{1}{2} \int_0^{\frac{1}{2}} \left( \pi (2x)^m \tan(\pi x) - \frac{4(2x)^{m+1}}{1 - (2x)^2} \right) dx \\
&= \frac{1}{2} \left[ -(2x)^m \log \cos(\pi x) + \log(1 - (2x)^2) \right]_0^{\frac{1}{2}} \\
&\quad + m 2^{m-1} \int_0^{\frac{1}{2}} x^{m-1} \log \cos(\pi x)  dx - 2 \int_0^{\frac{1}{2}} \frac{(2x)^{m+1} - 2x}{1 - (2x)^2} dx \\
&= \frac{1}{2} \log\left(\frac{2}{\pi}\right) + \frac{m}{2} \sum_{j=1}^{\left\lceil \frac{m-1}{2} \right\rceil} (-1)^j \binom{m-1}{2j-1} \frac{(2j-1)!}{\pi^{2j}} \zeta(2j+1) \\
&\quad - \frac{m!}{2\pi^{m}} \zeta_E(m+1) \cdot 
\begin{cases} 
(-1)^{\frac{m}{2}+1} & \text{if $m$ is even} \\ 
0 & \text{if $m$ is odd}
\end{cases} \\
&\quad + 
\begin{cases} 
\frac{1}{2} H_{\frac{m}{2}} & \text{if $m$ is even}, \\ 
\frac{1}{2} H_{\frac{m-1}{2}} - \log 2 + \bar{H}_m & \text{if $m$ is odd}.
\end{cases}
\end{aligned}
\end{equation}
The result then follows by applying identity \eqref{zeta-lam1}.
\hfill $\square$
\subsection*{Proof of Corollary \ref{cor-SC-1}}
It is shown in \cite[p. 164, Theorem 1.2]{MR}  (see also (\ref{3})) that
\begin{equation}\label{sum1}
\begin{aligned}
\sum_{n=1}^{\infty} \frac{\zeta(2n)-1}{n+m} 
&= \frac{1}{2m}+H_m-\log\pi \\
&\quad-2m\sum_{j=1}^{m-1}(-1)^{j+1}\binom{2m-1}{2j-1}\frac{(2j-1)!}{(2\pi)^{2j}}\zeta(2j+1).
\end{aligned}
\end{equation}
Since
$$\sum_{n=1}^{\infty} \frac{\zeta(2n)}{2^{2n}(n+m)} 
=\sum_{n=1}^\infty\frac{\zeta(2n)-1}{n+m}-\sum_{n=1}^\infty\frac{\lambda(2n)-1}{n+m},$$
by (\ref{zeta-lam1}), Theorem \ref{thm1} and (\ref{sum1}), we obtain the identity
\begin{equation}
\begin{aligned}
\sum_{n=1}^{\infty} \frac{\zeta(2n)}{2^{2n}(n+m)} 
&=\frac1{2m}-\log2-\frac{(-1)^{m}2(2m)!}{\pi^{2m}}\lambda(2m+1) \\
&\quad+2m\sum_{j=1}^{m-1}(-1)^j\binom{2m-1}{2j-1}\frac{(2j-1)!}{(2\pi)^{2j}}\zeta(2j+1) \\
&\quad-2m\sum_{j=1}^{m-1}(-1)^j\binom{2m-1}{2j-1}\frac{(2j-1)!2^{2j+1}}{\pi^{2j}(2^{2j+1}-1)}\lambda(2j+1) \\
&=\frac1{2m}-\log2-\frac{(-1)^{m}(2m)!(2^{2m+1}-1)}{(2\pi)^{2m}}\zeta(2m+1)  \\
&\quad+\sum_{j=1}^{m-1}(-1)^j\binom{2m}{2j}\frac{(2j)!}{(2\pi)^{2j}}(1-2^{2j})\zeta(2j+1) 
\end{aligned}
\end{equation}
for $m\geq2.$ 
This completes the proof.  
\hfill$\square$

\subsection*{Proof of Corollary \ref{cor-SC-2}}
The proof follows the same approach as Corollary \ref{cor-SC-1}. That is, by combining equation \eqref{zeta-lam1}, Theorem \ref{thm2}, and \cite[Theorem 3.2, p. 170]{MR} (which establishes \eqref{3-1}), we obtain the stated result.
\hfill $\square$

\subsection*{Proof of Theorem \ref{thm3}}
 Setting \( z = w + \frac{1}{2} \) (equivalently, \( w = z - \frac{1}{2} \)), we have
\[
\begin{aligned}
f(z) &= \frac{\pi}{2} z \tan(\pi z) - \frac{(2z)^2 - 1 + 1}{1 - (2z)^2} \\
&= 1 + \frac{\pi}{2} z \tan(\pi z) - \frac{1}{2}\left( \frac{1}{1 - 2z} + \frac{1}{1 + 2z} \right) \\
&= 1 - \frac{\pi}{2} \left( w + \frac{1}{2} \right) \cot(\pi w) - \frac{1}{2} \left(- \frac{1}{2w} + \frac{1}{2 + 2w} \right) =: h(w).
\end{aligned}
\]
Using the series expansion 
\begin{equation*}
\pi w \cot(\pi w) = 1 - 2\sum_{n=1}^\infty \zeta(2n) w^{2n}
\end{equation*}
for \( 0 < |w| < 1 \)  (see \cite[p. 937]{MR2}), we expand \( h(w) \) as follows
\[
\begin{aligned}
h(w) &= 1 - \frac{1}{2} \left( 1 - 2\sum_{n=1}^\infty \zeta(2n) w^{2n} \right) - \frac{1}{4w} \left( 1 - 2\sum_{n=1}^\infty \zeta(2n) w^{2n} \right) \\
&\quad + \frac{1}{4w} - \frac{1}{4(1 + w)} \\
&= \frac{1}{4} + \sum_{n=1}^\infty \zeta(2n) w^{2n} + \frac{1}{2} \sum_{n=1}^\infty \zeta(2n) w^{2n - 1} - \frac{1}{4(1 + w)}.
\end{aligned}
\]
Applying identity (\ref{zeta-lam1}), this becomes to
\[
\begin{aligned}
h(w) &= \frac{1}{4} + \sum_{n=1}^\infty \frac{2^{2n}}{2^{2n} - 1} \lambda(2n) \left( w^{2n} + \frac{1}{2} w^{2n - 1} \right) - \frac{1}{4} \sum_{n=1}^\infty (-1)^n w^n.
\end{aligned}
\]
Finally, by reversing the substitution \( w = z - \frac{1}{2} \), we have
$$
\begin{aligned}
			c_0:=\frac14, \quad c_n
			&:=\begin{cases}
				\displaystyle
				\frac12\left(\frac{2^{n+1}}{2^{n+1}-1}\right)\lambda(n+1)+\frac1{4} &\text{if $n$ is odd} \\
				\displaystyle
				\left(\frac{2^{n}}{2^{n}-1}\right)\lambda(n)-\frac1{4} &\text{if $n$ is even}
			 \end{cases}  \\
			&=\frac1{2^{\delta_n}}\left(
			\frac{2^{2\left\lceil \frac{n}{2}\right\rceil}}{2^{2\left\lceil \frac{n}{2}\right\rceil}-1} \right)
			\lambda\left(2\left\lceil \frac{n}{2}\right\rceil\right)
			-\frac{(-1)^n}{4},
\end{aligned}
$$
where 
$$\delta_n=\frac{1-(-1)^n}{2}.$$
This completes the proof.  
 \hfill$\square$
 
 \subsection*{Proof of Theorem \ref{thm4}}
This result follows by combining Lemma \ref{mr-lem} with Theorem \ref{thm3}. Consider the function defined in \eqref{ma-s-re}:
\begin{equation*}
f(z) = \sum_{n=0}^\infty c_n \left(z - \frac{1}{2}\right)^n, \quad \text{for } \left|z - \frac{1}{2}\right| < 1.
\end{equation*}
From  Lemma \ref{mr-lem}, we have its alternative representation
\[
f(z) = \sum_{n=1}^\infty (\lambda(2n) - 1)(2z)^{2n}, \quad \text{valid for } |z| < \frac{3}{2}.
\]
Then by differentiating $p$ times and evaluating the result at $z = \frac{1}{2}$, we obtain
\[
\begin{aligned}
f^{(p)}\left(\frac{1}{2}\right) 
&= \left. \sum_{n=1}^\infty \binom{2n}{p} p! \, 2^p (\lambda(2n) - 1) (2z)^{2n-p} \right|_{z=\frac{1}{2}} \\
&= p! \, c_p.
\end{aligned}
\]
This implies
\begin{equation}\label{3.11+}
\sum_{n=1}^\infty \binom{2n}{p} (\lambda(2n) - 1) = \frac{c_p}{2^p}.
\end{equation}
Finally, by applying Theorem \ref{thm3}, we get
\[
\sum_{n=1}^\infty \binom{2n}{p} (\lambda(2n) - 1) 
= \frac{1}{2^{2\left\lceil \frac{p}{2}\right\rceil} - 1} \lambda\left(2\left\lceil \frac{p}{2}\right\rceil\right) - \frac{(-1)^p}{2^{p+2}},
\]
which  completes our proof.  
\hfill$\square$

\subsection*{Proof of Corollary \ref{coro2}}
In Proposition 3.1 of \cite{MR2}, the authors have established the following closed-form expression
\begin{equation}\label{coro2.1}
\sum_{n=1}^\infty \binom{2n}{p}(\zeta(2n)-1) = 
\begin{cases}
\displaystyle
\zeta(p+1) + \frac{1}{2^{p+2}} & \text{if } p \text{ is odd}, \\
\displaystyle
\zeta(p) - \frac{1}{2^{p+2}} & \text{if } p \text{ is even}.
\end{cases}
\end{equation}
By (\ref{zeta-lam1}), we have
\begin{equation}\label{coro2.2}
\lambda(2n) = \left(1 - \frac{1}{2^{2n}}\right)\zeta(2n).
\end{equation} 
Substituting (\ref{coro2.2}) into  (\ref{3.11+}),  we have
$$\sum_{n=1}^\infty \binom{2n}{p}\frac{\zeta(2n)}{2^{2n}} 
= \sum_{n=1}^\infty \binom{2n}{p}(\zeta(2n)-1) - \frac{c_p}{2^p},$$
where $c_{p}$ are the coefficients in Theorem \ref{thm3}.
Then substituting  (\ref{coro2.1}) into the above equation, we further get that
\begin{equation}\label{coro2+}
\sum_{n=1}^\infty \binom{2n}{p}\frac{\zeta(2n)}{2^{2n}}= 
\begin{cases}
\displaystyle
\zeta(p+1) + \frac{1}{2^{p+2}}- \frac{c_p}{2^p}& \text{if } p \text{ is odd}, \\
\displaystyle
\zeta(p) - \frac{1}{2^{p+2}}- \frac{c_p}{2^p} & \text{if } p \text{ is even}.
\end{cases}
\end{equation}
In addition, by substituting  (\ref{coro2.2}) into (\ref{thm3.1}), we have the representation 
\begin{equation}\label{thm3.1+}
		\begin{aligned}
		    c_p
			&:=\frac1{2^{\delta_p}}\left(
			\frac{2^{2\left\lceil \frac{p}{2}\right\rceil}}{2^{2\left\lceil \frac{p}{2}\right\rceil}-1} \right)
			\lambda\left(2\left\lceil \frac{p}{2}\right\rceil\right)
			-\frac{(-1)^p}{4}\\
			&=\begin{cases}
			 \displaystyle
				\frac12\zeta(p+1)+\frac1{4} &\text{if $p$ is odd} \\
				\displaystyle
				\zeta(p)-\frac1{4} &\text{if $p$ is even}.
			 \end{cases} 
		\end{aligned}
	\end{equation}
Finally, by substituting (\ref{thm3.1+}) into (\ref{coro2+}), we get
\[
\begin{aligned}
\sum_{n=1}^\infty \binom{2n}{p}\frac{\zeta(2n)}{2^{2n}} 
&= \begin{cases}
\displaystyle
(1 - 2^{-(p+1)})\zeta(p+1) & \text{if } p \text{ is odd} \\
\displaystyle
(1 - 2^{-p})\zeta(p) & \text{if } p \text{ is even}
\end{cases} \\
&= \left(1 - \frac{1}{2^{2\left\lceil \frac{p}{2}\right\rceil}}\right) \zeta\left(2\left\lceil \frac{p}{2}\right\rceil\right),
\end{aligned}
\]
where the final identity follows from the ceiling function representation of parity conditions.
\hfill$\square$

\subsection*{Proof of Theorem \ref{thm5}}
Recall that for an arbitrary infinitely differentiable function \( f \), the differential operator identity holds (see \cite[p. 126, (9.42)]{QG}):
\begin{equation}\label{dif-1}
\left(z\frac{d}{dz}\right)^p f(z) = \sum_{j=1}^p {p \brace j} z^j \frac{d^j}{dz^j}f(z),
\end{equation}
where $p\in\mathbb N$ and \({p \brace j}\) denotes Stirling numbers of the second kind.
Using equation (\ref{dif-1}), the proof follows an argument analogous to that of Theorem \ref{thm4}.

\noindent
(1) Applying \eqref{dif-1} to the Taylor expansion 
\begin{equation*} 
f(z) = \sum_{n=0}^\infty c_n \left(z - \frac{1}{2}\right)^n 
\end{equation*}
 within its radius of convergence \( |z - \frac{1}{2}| < 1 \), we obtain
\[
\left.\left(z\frac{d}{dz}\right)^p f(z)\right|_{z=\frac{1}{2}} 
= \sum_{j=1}^p {p \brace j} \left(\frac{1}{2}\right)^j j! c_j.
\]

\noindent
(2) Consider the function 
\[ f(z) = \sum_{n=1}^\infty (\lambda(2n)-1)(2z)^{2n} \] 
for \( |z| < \frac{3}{2} \). Applying the operator $\left(z\frac{d}{dz}\right)^p$ again, we get
\[
\begin{aligned}
\left.\left(z\frac{d}{dz}\right)^p f(z)\right|_{z=\frac{1}{2}} 
&= \left.\left(2z\frac{d}{d(2z)}\right)^p \sum_{n=1}^\infty (\lambda(2n)-1)(2z)^{2n} \right|_{z=\frac{1}{2}} \\
&= \left.\sum_{n=1}^\infty (2n)^p (\lambda(2n)-1)(2z)^{2n} \right|_{z=\frac{1}{2}} \\
&= 2^p \sum_{n=1}^\infty n^p (\lambda(2n)-1).
\end{aligned}
\]
Combining the results from (1) and (2), we establish that
\begin{equation}\label{13-re}
\sum_{n=1}^\infty n^p (\lambda(2n)-1) = \frac{1}{2^p} \sum_{j=1}^p {p \brace j} \left(\frac{1}{2}\right)^j j!c_j.
\end{equation}
The result follows by expressing \( c_j \) via its lambda-function representation (see Theorem \ref{thm3}).
\hfill$\square$

\subsection*{Proof of Corollary \ref{coro3}}
Let
\begin{equation}\label{coro31}
g(z) := \sum_{n=1}^\infty (\zeta(2n) - 1) z^{2n} = 
\begin{cases} 
\frac{1}{2}(1 - \pi z \cot(\pi z)) & \text{if } |z| < 2,\ z \not\in \{-1,0,1\}, \\ 
\frac{3}{4} & \text{if } z = 0, \\
0 & \text{if } z = \pm 1. 
\end{cases}
\end{equation}
It admits the following Taylor expansion about $z=1$ for $|z-1|<1$ \cite[p.~930, (0.1) and p.~937, Theorem~2.1]{MR2}:
\[
g(z) = \sum_{n=0}^\infty a_n (z-1)^n,
\]
where the coefficients are given by
\begin{equation}\label{co-an}
a_0 := \frac{3}{4}, \quad 
a_n := 
\begin{cases} 
\zeta(n+1) + \frac{1}{2^{n+2}} & \text{if $n$ is odd}, \\ 
\zeta(n) - \frac{1}{2^{n+2}} & \text{if $n$ is even},
\end{cases}
\end{equation}
and satisfy $g^{(n)}(1) = n! a_n$ (see Remark \ref{rem-g}).

Following the methodology of the proof for Theorem \ref{thm5}, we obtain 
\begin{equation}\label{15-re}
\sum_{n=1}^\infty n^p (\zeta(2n) - 1) = \frac{1}{2^p} \sum_{j=1}^p {p \brace j} j! a_j
\end{equation}
for $p \in \mathbb{N}$. This result is equivalent to Mortini and Rupp's Theorem 2.3 \cite[p.~938]{MR2} from elementary transformations.

Using successively \eqref{coro2.2}, \eqref{13-re},  \eqref{15-re}, \eqref{co-an}, and \eqref{thm3.1+}, we have
\[
\begin{aligned}
\sum_{n=1}^\infty n^p \frac{\zeta(2n)}{2^{2n}} 
&= \sum_{n=1}^\infty n^p (\zeta(2n) - 1) - \sum_{n=1}^\infty n^p (\lambda(2n) - 1) \\
&= \frac{1}{2^p} \sum_{j=1}^p {p \brace j} j! \left( a_j - \left( \frac{1}{2} \right)^j c_j \right) \\
&= \frac{1}{2^p} \sum_{j=1}^p {p \brace j} j! 
\begin{cases} 
\left(1 - \frac{1}{2^{j+1}}\right) \zeta(j+1) & \text{if $j$ is odd} \\ 
\left(1 - \frac{1}{2^{j}}\right) \zeta(j) & \text{if $j$ is even}
\end{cases} \\
&= \frac{1}{2^p} \sum_{j=1}^p {p \brace j} j! 
\left(1 - \frac{1}{2^{2\lceil j/2 \rceil}}\right) \zeta\left(2\left\lceil \frac{j}{2} \right\rceil\right).
\end{aligned}
\]
This completes the proof of Corollary \ref{coro3}. \hfill $\square$

\section*{Acknowledgements}
We thank Professor Christophe Vignat for his interest in this work and for his valuable comments and suggestions. We also acknowledge the use of \texttt{Mathematica} and \texttt{Wolfram Alpha} (\url{https://www.wolframalpha.com}) for verifying the analytical results presented in this work.

\bibliography{central}

\end{document}